\newtheorem{theorem}{Theorem}[section]
\newtheorem{prop}[theorem]{Proposition}
\newtheorem{lemma}[theorem]{Lemma}
\newtheorem{cor}[theorem]{Corollary}
\theoremstyle{definition}
\newtheorem{ex}[theorem]{Example}
\newtheorem{defin}[theorem]{Definition}
\newtheorem{remark}[theorem]{Remark}
\newcommand\circled[1]{%
  \mathpalette\@circled{#1}%
}
\newcommand\@circled[2]{%
  \tikz[baseline=(math.base)] \node[draw,circle,inner sep=1pt] (math) {$\m@th#1#2$};%
}
\DeclareMathOperator{\Red}{Red}
\DeclareMathOperator{\Inv}{Inv}
\DeclareMathOperator{\SYT}{SYT}
\DeclareMathOperator{\SDT}{SDT}
\DeclareMathOperator{\BS}{BS}
\DeclareMathOperator{\rk}{rk}
\DeclareMathOperator{\ro}{ro}
\DeclareMathOperator{\sh}{sh}
\title{Balanced Shifted Tableaux}
\author{Jiyang Gao}
\address{Department of Mathematics, Harvard University, Cambridge, MA 02138}
\email{\href{mailto:jgao@math.harvard.edu}{{\tt jgao@math.harvard.edu}}}
\author{Shiliang Gao}
\address{Department of Mathematics, University of Illinois at Urbana-Champaign, Urbana, IL 61801}
\email{\href{mailto:sgao23@illinois.edu}{{\tt sgao23@illinois.edu}}}
\author{Yibo Gao}
\address{Department of Mathematics, Massachusetts Institute of Technology, Cambridge, MA 02139}
\email{\href{mailto:gaoyibo@mit.edu}{{\tt gaoyibo@mit.edu}}}
\date{\today}
\begin{document}
\begin{abstract}
We introduce balanced shifted tableaux, as an analogue of balanced tableaux of Edelman and Greene, from the perspective of root systems of type $B$ and $C$. We show that they are equinumerous to standard Young tableaux of the corresponding shifted shape by presenting an explicit bijection. 
\end{abstract}
\maketitle

\section{Introduction}\label{sec:intro}
In their seminal paper \cite{BalancedTableaux}, Edelman and Greene introduced \emph{balanced tableaux} and showed that they are equinumerous to standard Young tableaux of the same shape. They defined the \emph{Edelman-Greene insertion} which yields a bijective proof of the reduced words of the longest permutation being equinumerous to standard Young tableaux of staircase shape, a result due originally to Stanley \cite{stanley1984number}. Fomin, Greene, Reiner and Shimozono \cite{fomin1997balanced} later generalized this enumeration result to diagrams and related the story to Schubert polynomials. 

Shifted tableaux, just as Young tableaux, are also algebraically and combinatorially meaningful (see for example \cite{sagan1987shifted,WorleyThesis}). In this paper, we define \emph{balanced shifted tableaux} (Definition~\ref{def:balanced}), as an analogue to balanced tableaux,
from the perspective of root systems of type $B$ and $C$. The following is our main theorem, which says that balanced shifted tableaux are equinumerous to standard shifted tableaux. 
\begin{theorem}\label{thm:main}
For a shifted shape $\lambda$, the number of standard Young tableaux of shape $\lambda$ equals the number of balanced shifted tableaux of shape $\lambda$. 
\end{theorem}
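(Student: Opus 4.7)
The plan is to construct an explicit bijection $\Phi$ from balanced shifted tableaux of shape $\lambda$ to standard shifted tableaux of shape $\lambda$, proving equinumerosity by induction on $n = |\lambda|$. The base case $n=1$ is immediate since each set is a singleton.

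For the inductive step, the first task is to understand where the largest entry $n$ of a balanced shifted tableau $T$ can sit. Using the balanced condition from Definition~\ref{def:balanced}, I expect to show that the cell containing $n$ is connected via a canonical ``slide path'' to some outer corner cell of $\lambda$. In the ordinary case of Edelman--Greene, a local swap of two adjacent entries carries the balanced condition at one cell to the balanced condition at another; here I would look for the analogous local move compatible with type $B$/$C$ hooks (which, for cells near the staircase diagonal, include a reflected segment of cells across the diagonal). Traversing this path converts $T$ into a balanced shifted tableau $\tilde T$ in which $n$ sits at an outer corner $c$ of $\lambda$.

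Given such a terminal cell $c$, define $\Phi(T)$ recursively: delete $c$ together with the entry $n$ to obtain a balanced shifted tableau $T'$ of shape $\lambda \setminus \{c\}$; apply the inductive hypothesis to obtain a standard shifted tableau $S'$; then append a new cell at $c$ containing $n$ to form $\Phi(T) = S$. The inverse proceeds symmetrically: from a standard shifted tableau $S$, remove $n$ at its corner, recursively invert the bijection to produce $T'$, and reverse the slide path to reinsert $n$ at its position in $T$.

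The principal obstacle is proving that the slide procedure genuinely preserves the balanced condition throughout the path. For shifted shapes, the hook of a cell near the diagonal is ``folded,'' so a single local swap can simultaneously affect the hook content of many other cells on the opposite side of the diagonal. The nontrivial combinatorial content will be a case analysis showing that the rank statistic in each affected hook transforms correctly, together with uniqueness of the slide path (so that $\Phi$ is well-defined and injective). I expect the type $B$/$C$ root system framework from which the balanced condition is derived to make these invariants natural once the bookkeeping is set up; in particular, the swap should correspond to a simple reflection, and the slide path to a reduced expression for a suitable Weyl group element moving the corner $c$ into the target position.
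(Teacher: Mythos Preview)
Your proposal is a plan, not a proof: the central object---the ``slide path'' that moves $n$ to an outer corner while preserving the balanced condition---is never defined, and the invariance claim is only hoped for, not established. You yourself flag the main obstacle: for a cell with negative column index the extended hook $\tilde H(i,j)$ contains an entire extra row together with the diagonal duplicate $\tilde B(d{+}1{+}j,j)$, so a local swap of two entries can alter the multiset content of many distant hooks simultaneously. In the ordinary (type $A$) setting Edelman and Greene's promotion/evacuation argument works because each hook is a genuine $L$-shape and a swap of vertically or horizontally adjacent entries changes the rank count in at most two hooks in a controlled way. Here neither the location of $n$ in a balanced shifted tableau, nor the existence and uniqueness of a swap sequence terminating at a corner, nor the compatibility of such swaps with the rank function $\rk(i,j)$ (whose very definition changes as you cross column $0$) has been argued. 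Without those pieces the induction does not get off the ground.

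The paper does not attempt a direct sliding argument at all. Instead it pads $\lambda$ to a trapezoid $Z(d,r)$, identifies $\BS(Z(d,r))$ with reflection orders of a specific vexillary signed permutation $w^{(d,r)}$ (Proposition~\ref{prop:root-order-balanced}), and uses Kra\'skiewicz's type~$B$ Edelman--Greene insertion to match those with $\SYT(Z(d,r))$. The general case is then obtained by showing that both the balanced-tableau side and the standard-tableau side restrict compatibly to the subsets indexed by $\lambda$ inside $Z(d,r)$ (Corollary~\ref{cor:RedtoSYTlambda}, Lemma~\ref{lm:BStoBSlambda}, Proposition~\ref{lm:BSlambdatoRed}). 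The delicate hook bookkeeping you anticipate is replaced by the ``strongly balanced'' conditions of Lemma~\ref{lm:balance-equiv}, which translate balancedness directly into the root-order conditions of Proposition~\ref{prop:root-ordering}; this is what makes the folded hooks tractable. If you want to pursue your route, you would need a shifted analogue of promotion that respects $\rk$ across the diagonal, and that is essentially a new theorem rather than a routine adaptation.
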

We prove Theorem~\ref{thm:main} by presenting an explicit bijection between the two sets of objects, $\SYT(\lambda)$ and $\BS(\lambda)$. 
Specifically, we have the following chain of bijections: 
\[\SYT(\lambda)\longleftrightarrow\SYT(Z(d,r))|_{\lambda}\longleftrightarrow\Red(w^{\lambda})\longleftrightarrow\BS(Z(d,r))|_{\lambda}\longleftrightarrow\BS(\lambda),\]
where we address each step separately. We defer the definition of $\SYT(\lambda)$ and $\BS(\lambda)$ to Section~\ref{sec:prelim} and the definition of $\SYT(Z(d,r))|_{\lambda}$, $w^{\lambda}$ and $\BS(Z(d,r))|_{\lambda}$ to Section~\ref{sec:general}.
Here, $\SYT(\lambda)\rightarrow \SYT(Z(d,r))|_{\lambda}$ and $\BS(\lambda)\rightarrow\BS(Z(d,r))|_{\lambda}$ are the procedures to pad a tableaux from shape $\lambda$ to a large trapezoid $Z(d,r)$, while the middle steps utilize type $B$ Edelman-Greene insertion defined by Kra\'skiewicz \cite{K89}.
Our strategy largely follows the framework of Edelman and Greene \cite{BalancedTableaux}, with the main difference that double staircases, which are the analogues of staircases in type $B$, are no longer sufficient for padding purposes.

The remainder of the paper is organized as follows. In Section~\ref{sec:prelim}, we introduce key definitions and necessary background. In Section~\ref{sec:trapezoid}, we discuss the trapezoid shape $Z(d,r)$ which serves the purpose of the staircase shape in type $A$, and provide a bijection between $\BS(Z(d,r))$ and $\Red(w^{(d,r)})$, where $w^{(d,r)}$ is certain signed permutation. In Section~\ref{sec:kraskiewicz}, we provide a bijection between $\SYT(Z(d,r))$ and $\Red(w^{(d,r)})$ via Kra\'skiewicz's insertion algorithm, thus establishing the main theorem for the trapezoid case. Finally in Section~\ref{sec:general}, we finish the proof of Theorem~\ref{thm:main} for general shifted shapes by restricting the above bijections to $\lambda\subset Z(d,r)$.
\section{Definitions and Preliminaries}\label{sec:prelim}
\subsection{Strict partitions and shifted tableaux}
A \textit{strict partition} $\lambda$ is a sequence of strictly decreasing positive integers $(\lambda_1>\lambda_2>\cdots>\lambda_d>0)$, where $d$ is the number of (nonzero) parts of $\lambda$. We denote $|\lambda| = \sum_{i = 1}^{d} \lambda_i$ as the \textit{size} of $\lambda$. For a strict partition $\lambda$ its corresponding 
\textit{shifted shape}, consists of $\lambda_i$ boxes in row $i$, shifted $d-i+1$ steps to the left. More specifically, the shifted shape is the diagram
\[D(\lambda):=\{(i,j{-}d{+}i{-}1)\:|\: 1\leq i\leq d,\ 1\leq j\leq \lambda_i\}.\]
For simplicity of notation, we also use $\lambda$ to denote its shape $D(\lambda)$. Note that for a shifted shape, its columns $-(d-1),\ldots,0$ form a staircase shape of length $d$ flipped horizontally. For a shifted shape $\lambda$, define a \emph{shifted tableaux} $T$ to be a filling of $D(\lambda)$ with non-negative integers. For any shifted tableaux $T$, let $\sh(T)$ denote its underlying shifted shape. 

Throughout the paper, we fix the number $d$, that is the length of all the shifted shapes we are going to consider. We also write $\bar i$ to mean $-i$. 

\begin{defin}\label{def:standard}
A shifted tableaux $T$ of shape $\lambda$ is called a \emph{standard Young tableaux} if it is a filling of $1,2,\ldots,|\lambda|$ that is increasing in rows and columns.
\end{defin}
The set of standard Young tableaux of shape $\lambda$ is denote $\SYT(\lambda)$ and its cardinality is denoted $f^{\lambda}$. The number $f^{\lambda}$ can be computed via the hook length formula as we explain here. For a box $(i,j)\in\lambda$ with $j\geq0$, its \textit{hook} $H(i,j)$ consists of all the boxes in row $i$ to the right of $(i,j)$, all the boxes in column $j$ below $(i,j)$ and the box $(i,j)$ itself. For a box $(i,\bar j)\in\lambda$ with $j>0$, its \textit{hook} $H(i,\bar j)$ consists of all the boxes in row $i$ to the right of $(i,\bar j)$, all the boxes in column $\bar j$ below $(i,\bar j)$, the box $(i,\bar j)$ itself and all the boxes in row $d-j+1$. Let $h(i,j)=|H(i,j)|$ be the size of the hook.
\begin{theorem}\cite{Thrall}
For a shifted shape $\lambda$, $f^{\lambda}=|\lambda|!\,/\prod_{x\in\lambda}h(x).$
\end{theorem}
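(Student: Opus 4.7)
The plan is to prove this classical shifted hook length formula by induction on $|\lambda|$, adapting the probabilistic hook walk argument of Greene--Nijenhuis--Wilf to the shifted setting (as originally carried out by Sagan). The base case $|\lambda|=1$ is immediate. For the inductive step, the elementary branching recurrence
\begin{equation*}
f^{\lambda} \;=\; \sum_{c} f^{\lambda \setminus c},
\end{equation*}
where $c$ ranges over removable corner boxes of $\lambda$, combined with the inductive hypothesis applied to each $\lambda \setminus c$, reduces the theorem to the identity
\begin{equation*}
\frac{1}{\prod_{x \in \lambda} h(x)} \;=\; \sum_{c} \frac{1}{|\lambda|} \cdot \frac{1}{\prod_{x \in \lambda \setminus c} h_{\lambda \setminus c}(x)}.
\end{equation*}

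To establish this, I would introduce a \emph{shifted hook walk} on $\lambda$: pick $x_0 \in \lambda$ uniformly at random, and iteratively move from a box $y$ to a uniformly chosen element of $H(y) \setminus \{y\}$ until a corner $c$ is reached. Writing $P(c)$ for the probability that the walk terminates at corner $c$, the displayed identity is equivalent to
\begin{equation*}
P(c) \;=\; \frac{1}{|\lambda|} \cdot \frac{\prod_{x \in \lambda} h(x)}{\prod_{x \in \lambda \setminus c} h_{\lambda \setminus c}(x)}
\end{equation*}
together with $\sum_{c} P(c) = 1$. The probability $P(c)$ is computed by inducting on the length of the walk, parametrizing walks by an ``arm and leg'' projection set that captures precisely those boxes whose hook length changes when passing from $\lambda$ to $\lambda \setminus c$; these are exactly the boxes in the hook of $c$ (in the shifted sense), so the product telescopes to the claimed ratio.

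The main obstacle is the treatment of boxes with $j < 0$: the hook $H(i,\bar j)$ wraps around to include all of row $d-j+1$, so the walk can jump from the shifted strip into the unshifted portion in a combinatorially nontrivial way, and a single corner $c$ may receive contributions from walks that traverse both regions. To handle this, one must set up the walk carefully (tracking row versus column steps separately once a staircase row is entered, in the style of Sagan) so that the contributions of the modified hook lengths again telescope along the generalized hook of $c$. Once this telescoping is verified, summing $P(c)$ over corners yields $1$ for free, rearranging into the desired recursion and closing the induction.
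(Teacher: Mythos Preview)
The paper does not prove this theorem at all: it is stated with a citation to Thrall and used as background for the hook length formula, with no argument given. So there is no ``paper's own proof'' to compare your proposal against.

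Your proposal follows a different and historically later route than Thrall's original (which is algebraic, going through symmetric functions and projective characters). The probabilistic hook walk you outline is indeed a valid proof strategy for the shifted case, carried out by Sagan. Your sketch correctly identifies the branching recursion, the reduction to a probability identity, and the main subtlety, namely that for boxes $(i,\bar j)$ with $j>0$ the hook wraps around to include an entire row, so the walk can jump nonlocally and the telescoping is not the same as in the straight-shape case.

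That said, the proposal as written is a plan rather than a proof: the crucial step is precisely the one you flag as ``the main obstacle'' and then defer with ``one must set up the walk carefully \ldots\ so that the contributions again telescope.'' In Sagan's argument this is where all the work lies; the correct formulation involves distinguishing diagonal and off-diagonal moves and proving a conditional-probability lemma that handles the wrap-around rows. Until that lemma is actually stated and proved, your write-up does not yet contain the key idea that makes the induction close. If you want this to stand as an independent proof, you need to spell out that lemma and its verification; otherwise, citing Sagan (as the paper cites Thrall) would be the honest move.
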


To define an analogous notion of balanced tableaux, as in \cite{BalancedTableaux}, for shifted shapes, we need some more notions. For a filling $B$ of shape $\lambda$, its \textit{extended filling} $\tilde{B}$ is a filling of the extended shape \[\tilde{\lambda}=\lambda\cup\{(1,\bar d),(2,\overline{d{-}1}),\ldots(d,\bar1)\}\] which agrees with $B$ on $\lambda$ and equals $B(i,0)$ on the newly added box $(i,-(d+1-i))$. The \textit{extended hook} is defined as $\tilde{H}(i,j)=H(i,j)$ for $j\geq0$, and $\tilde{H}(i,\bar j)=H(i,j)\cup\{(d+1-j,\bar j)\}$ for $j>0$. See Example~\ref{ex:BST621} for visualization.

For a box $(i,j)\in\lambda$, we also define its \textit{rank function} $\rk(i,j)$. If $j\geq0$, let $\rk(i,j)$ be the number of boxes in row $i$ of $H(i,j)$, and let $\rk(i,\bar j)$ be $2$ plus the number of boxes in $H(i,j)$ with positive column index. More formally,
\[
\rk(i,j)=\begin{cases}
\lambda_i-d+i-j\ &\text{if }j\geq0,\\
\lambda_i-d+i+\lambda_{d+1+j}+j+1 &\text{if }j<0.
\end{cases}
\]
We can now introduce our main object of study:
\begin{defin}\label{def:balanced}
A shifted tableaux $B$ of shape $\lambda$ is called a \textit{balanced shifted tableaux} if it is a filling of $1,2,\ldots,|\lambda|$ such that $B(i,j)$ is the $\rk(i,j)$-th largest entry in the extended hook $\tilde H(i,j)$ of $\tilde{B}$ for all $(i,j)\in \lambda$. Define $\BS(\lambda)$ to be the set of balanced shifted tableaux of shape $\lambda$.
\end{defin}

\begin{remark}
If we instead naively define $\rk(i,j)$ to be the length of the \textit{right arm} of $H(i,j)$ as in straight shapes, i.e. define the balanced condition to be $B(i,j)$ remains unchanged after reordering the elements in $H(i,j)$ (or $\tilde{H}(i,j)$ ), then the number of such tableaux is different from $f^{\lambda}$.
\end{remark}

\begin{ex}\label{ex:BST621}
Let $\lambda=(6,2,1)$ and consider the balanced shifted tableaux in Figure~\ref{fig:BST621}. The hook $H(1,-1)$ contains the colored boxes so $h(1,-1)=7$, while the extended hook $\tilde{H}(1,-1)$ contains one more box at coordiante $(3,-1)$, which is circled and filled with $1$. As this hook contains $3$ boxes with positive column index, we have $\rk(1,-1)=5$. The balanced condition is now satisfied at coordinate $(1,-1)$ as $3$ is indeed the $5$-th largest numbers among the numbers in the extend hook, $9,5,2,4,3,7,1,1$.
\begin{figure}[h!]
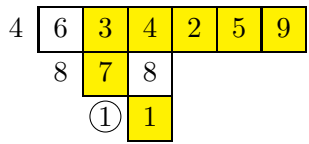

\centering
\ytableausetup{boxsize=1.5em}
\begin{ytableau}
\none[4]&6&*(yellow)3&*(yellow)4&*(yellow)2&*(yellow)5&*(yellow)9\\
\none&\none[8]&*(yellow)7&8&\none&\none&\none\\
\none&\none&\none[\circled{1}]&*(yellow)1&\none&\none&\none
\end{ytableau}
\caption{A balanced shifted tableau of shape $(6,2,1)$}
\label{fig:BST621}
\end{figure}
\end{ex}
\begin{remark}
Here is another way to understand the extended hooks $\tilde{H}(i,j)$ and ranks $\rk(i,j)$, shown in Figure~\ref{fig:BSTsymmetric}. Given a shifted tableau $B$, we stack its extended filling $\tilde{B}$ and a flipped copy $B^T$ (see left of Figure~\ref{fig:BSTsymmetric}) together to obtain a larger tableau $B_0$. The entries in the shaded boxes on the diagonal agree with column $0$ of $B$. Then the extended hook $\tilde{H}(i,j)$ of $x=(i,j)$ is the same as the standard hook of $x$ in $\tilde{B}$ (colored in blue). The rank $\rk(i,j)$ is the number of yellow boxes (see right of Figure~\ref{fig:BSTsymmetric}) in the hook of $x$.
\begin{figure}[h!]
\centering
\begin{tikzpicture}[scale = 0.4]
\fill[yellow!100] (-4,4) rectangle (-3,3);
\fill[yellow!100] (-3,3) rectangle (-2,2);
\fill[yellow!100] (-2,2) rectangle (-1,1);
\fill[yellow!100] (-1,1) rectangle (0,0);
\draw (0,0) -- ++(2,0) --++(0,1) -- ++(1,0) --++(0,1) --++(2,0) --++(0,2) --++(-9,0) --++(0,-1);
\node at (1,2) {$\tilde{B}$};
\draw (0,0) -- ++(0,-2) --++(-1,0) -- ++(0,-1) --++(-1,0) --++(0,-2) --++(-2,0) --++(0,8) --++(1,0) --++(0,-1) --++(1,0) --++(0,-1) --++(1,0) --++(0,-1) -- cycle;
\node at (-2,-1) {$B^T$};

\begin{scope}[xshift=400pt]
\fill[yellow] (0,3) rectangle (5,2);
\fill[yellow] (-2,0) rectangle (-1,-3);
\draw (0,0) -- ++(2,0) --++(0,1) -- ++(1,0) --++(0,1) --++(2,0) --++(0,2) --++(-9,0) --++(0,-9) --++(2,0) --++(0,2) --++(1,0) --++(0,1) --++(1,0) -- cycle;
\draw[dashed] (-4,0) -- (0,0) -- (0,4);
\draw[blue] (-2,3) rectangle (-1,2);
\node[blue] at (-1.5,2.5) {$x$};
\draw[blue] (-1,3) rectangle (5,2);
\draw[blue] (-2,2) rectangle (-1,-3);
\end{scope}
\end{tikzpicture}
\caption{An alternative description of $\rk(i,j)$}
\label{fig:BSTsymmetric}
\end{figure}
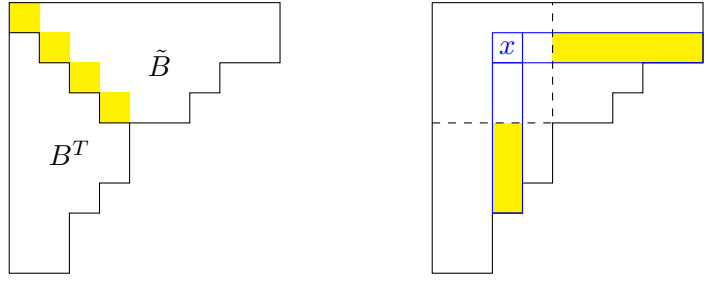
\end{remark}

\subsection{Root systems and Weyl groups}
Readers are referred to \cite{humphreys} for detailed exposition on root systems and Weyl groups. Let $\Phi\subset V\simeq\mathbb{R}^d$ be a finite crystallographic root system of rank $d$, with a chosen set of positive roots $\Phi^+$ which corresponds to a set of simple roots $\Delta=\{\alpha_0,\alpha_1,\ldots,\alpha_{d-1}\}$. Let $s_{\alpha}$ be the reflection across the hyperplane normal to $\alpha$, and write $s_i$ for the simple reflections $s_{\alpha_i}$. Let $W(\Phi)\subset\mathrm{GL}(V)$ be the finite Weyl group, defined to be generated by $s_0,\ldots,s_{d-1}$. 

For $w\in W(\Phi)$, let $\ell(w)$ denote its Coxeter length, which equals the size of its (left) inversion set $\Inv(w):=\Phi^+\cap w\Phi^-$. For any sequence $\mathbf{a} = (a_1,a_2,\ldots,a_{\ell(w)})$, we say $\mathbf{a}$ is a reduced word of $w$ if $w = s_{a_1}s_{a_2},\ldots,s_{a_{\ell(w)}}$. 
Let $\Red(w)$ be the set of reduced words of $w$. For each reduced word $\mathbf{a}\in \Red(w)$,
its \textit{(total) reflection order} is an ordering $\ro(\mathbf{a})=\gamma_1,\ldots,\gamma_{\ell(w)}$ of $\Inv(w)$ where $\gamma_j=s_{a_1}\cdots s_{a_{j-1}}\alpha_j\in\Phi^+$. 
Let 
\[\ro(w) = \{\ro(\mathbf{a}):\mathbf{a}\in \Red(w)\}.\]
The following proposition is classical and very useful, which follows immediately from the biconvexity classification of inversion sets. See for example Proposition 3 of \cite{bjorner1984orderings}.
\begin{prop}\label{prop:root-ordering}
Let $\gamma = \gamma_1,\ldots,\gamma_{\ell(w)}$ be an ordering of $\Inv(w)$. Then $\gamma\in \ro(w)$
if and only if for all the triples $\alpha,\beta,\alpha+\beta\in\Phi^+$ such that $\alpha,\alpha+\beta\in\Inv(w)$,
\begin{enumerate}
\item if $\beta\notin\Inv(w)$, then $\alpha$ appears before $\alpha+\beta$ in this sequence;
\item and if $\beta\in\Inv(w)$, then $\alpha+\beta$ appears in the middle of $\alpha$ and $\beta$. 
\end{enumerate}
\end{prop}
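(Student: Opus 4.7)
My plan is to prove both directions by induction on $\ell(w)$, using the standard fact that if $\mathbf{a}=(a_1,\ldots,a_{\ell(w)})\in\Red(w)$, then $\gamma_1=\alpha_{a_1}$ is simple and $(s_{a_1}\gamma_2,\ldots,s_{a_1}\gamma_{\ell(w)})$ is a reflection ordering of $\Inv(s_{a_1}w)=s_{a_1}(\Inv(w)\setminus\{\gamma_1\})$, corresponding to the reduced word $(a_2,\ldots,a_{\ell(w)})$. The base cases $\ell(w)\le 1$ are immediate.

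For the forward direction, assume $\gamma=\ro(\mathbf{a})$ and fix a triple $\alpha,\beta,\alpha+\beta\in\Phi^+$ with $\alpha,\alpha+\beta\in\Inv(w)$. Since $\gamma_1$ is simple, it cannot equal $\alpha+\beta$. If $\gamma_1\notin\{\alpha,\beta\}$, then $s_{a_1}$ preserves the positivity of all three roots and sends the triple to one of the same form inside $\Inv(s_{a_1}w)$, so the ordering condition in the tail comes from the inductive hypothesis and lifts back. If $\gamma_1=\alpha$, then condition (1) holds trivially as $\gamma_1$ is first; for condition (2) when $\beta\in\Inv(w)$, I would form the triple $(s_{a_1}(\alpha+\beta),\alpha,s_{a_1}\beta)$ of positive roots inside $\Inv(s_{a_1}w)$ (noting $\alpha\notin\Inv(s_{a_1}w)$ since $-\alpha\notin\Phi^+$) and invoke condition (1) inductively to conclude that $\alpha+\beta$ precedes $\beta$, placing $\alpha+\beta$ in the middle of $\alpha$ and $\beta$. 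The case $\gamma_1=\beta$ is symmetric.

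For the backward direction, the crucial observation is that $\gamma_1$ must be simple. Otherwise write $\gamma_1=\alpha+\beta$ with $\alpha,\beta\in\Phi^+$; biconvexity of $\Inv(w)$ forces at least one of $\alpha,\beta$ to lie in $\Inv(w)$, say $\alpha$. Then condition (1) (if $\beta\notin\Inv(w)$) or condition (2) (if $\beta\in\Inv(w)$) forces $\gamma_1=\alpha+\beta$ to appear strictly after another element, contradicting that $\gamma_1$ is first. Hence $\gamma_1=\alpha_{a_1}$ for some $a_1$, and I would apply $s_{a_1}$ to the tail to obtain an ordering of $\Inv(s_{a_1}w)$, verify that the triple conditions transfer by a case analysis mirroring the forward direction, and use the inductive hypothesis to extract a reduced word $\mathbf{a}'\in\Red(s_{a_1}w)$ whose reflection order matches. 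Prepending $a_1$ gives $\mathbf{a}\in\Red(w)$ with $\ro(\mathbf{a})=\gamma$.

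The main obstacle is the case analysis when $\gamma_1$ participates in a triple: because $s_{a_1}$ negates $\gamma_1$ itself, one must rearrange the three roots to exhibit a new triple of positive roots, correctly identify which one now plays the role of the non-inversion summand, and match the resulting ordering constraint to the appropriate case of the inductive hypothesis. Everything else reduces to standard properties of inversion sets, which is why the proposition is attributed in the literature to the biconvexity classification.
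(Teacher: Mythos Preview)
Your inductive argument is correct and self-contained, whereas the paper does not actually prove this proposition: it is quoted as classical and attributed to the biconvexity classification of inversion sets, with a pointer to Bj\"orner. So there is no proof in the paper to compare against beyond that citation.

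The route you take---peel off $\gamma_1$, show it must be simple, apply $s_{a_1}$, and recurse on $\Inv(s_{a_1}w)$---is the standard direct one. The delicate point you flag is genuine: when $\gamma_1$ coincides with a summand of a triple, $s_{a_1}$ negates it, and one must rewrite the relation as $s_{a_1}(\alpha+\beta)+\alpha_{a_1}=s_{a_1}\beta$ (or its symmetric version) to obtain a triple of positive roots to which the inductive hypothesis applies, with the roles of the two summands reshuffled. You handle this correctly in the forward direction, and the backward direction is indeed the mirror image. One small fact you invoke implicitly when arguing that $\gamma_1$ must be simple is that every non-simple positive root in a finite crystallographic root system is a sum of two positive roots (in fact a simple root plus a positive root); this is standard but worth making explicit.
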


We are primarily concerned with root systems of type $B_n$, and adopt the following convention, where $e_i$ is the $i$-th coordinate vector:
\begin{itemize}
\item $\Phi(B_n)=\{\pm e_j\pm e_i\:|\: 1\leq i<j\leq n\}\cup\{\pm e_i\:|\: 1\leq i\leq n\}$;
\item $\Phi^+(B_n)=\{e_j\pm e_i\:|\: 1\leq i<j\leq n\}\cup\{e_i\:|\: 1\leq i\leq n\}$;
\item $\Delta=\{\alpha_0=e_1,\alpha_1=e_2-e_1,\ldots,\alpha_{n-1}=e_{n}-e_{n-1}\}$;
\item $W(B_n)=\{\text{permutation }w\text{ on }1,\ldots,n,\bar 1,\ldots,\bar n\:|\: w(i)=-w(\bar i),\ \forall i\}.$
\end{itemize}
The type $B_n$ Weyl group $W(B_n)$ is called the group of \textit{signed permutations}. For a signed permutation $w$, its one-line notation is written as $w(1)w(2)\cdots w(n)$. For example, $w=3\bar 42\bar 1\in W(B_4)$ means that $w(1)=3$, $w(2)=-4$, $w(3)=2$ so that $w(-3)=-2$ and $w(4)=-1$. A reduced word of $w\in W(B_n)$ can be viewed as going from $\mathrm{id}=12\cdots n$ to $w$ by swapping adjacent entries (and their negatives) one step at a time, while the corresponding reflection order records $e_j-e_i$ if the values $j$ and $i$ are swapped (and records $e_i$ if $i$ and $\bar i$ are swapped).
\begin{ex}
Consider $w=1\bar342\in W(B_4)$ with a reduced word 
$\mathbf{a} = 21031\in \Red(w)$.
We compute its reflection order to be $e_3-e_2,e_3-e_1,e_3,e_4-e_2,e_3+e_1$, which can be seen as follows:
\[
\begin{tikzcd}
1234 \arrow[r, "e_3-e_2"]& 1324 \arrow[r, "e_3-e_1"] & 3124 \arrow[r, "e_3"] & \bar3124 \arrow[r, "e_4-e_2"] & \bar3142 \arrow[r, "e_3+e_1"] & 1\bar342.
\end{tikzcd}
\]
\end{ex}
\section{Bijection between \texorpdfstring{$\BS(Z(d,r))$}{} and \texorpdfstring{$\Red(w^{(d,r)})$}{} via reflection order}\label{sec:trapezoid}
A crucial shape for our analysis is the \emph{trapezoid} 
\[Z(d,r):=(r+2d-1,r+2d-3,\ldots,r+3,r+1)\]
with height $d$ and base lengths $r+2d-1$ and $r+1$. In particular, $Z(d,0)$ is the \textit{double staircase} and every shifted shape is contained in some trapezoid of the same height. 

Set $e_{-j} = -e_j$ for all $j>0$ and $e_0=0$, and consider the labeling $f:Z(d,r) \longrightarrow \Phi^{+}(B_{d+r})$ where
\begin{equation}\label{eqn:defrootlabel}
    \begin{split}
        f(i,j) = 
\begin{cases}
    e_{d+1-i}-e_{j} & \text{if } j\leq 0,\\
    e_{d+1-i}+e_{j+d}&\text{if }0<j\leq r,\\
    e_{d+1-i}-e_{j-r}&\text{if }j>r.
\end{cases}
    \end{split}
\end{equation}
Define the permutation $w^{(d,r)}\in W(B_{d+r})$ associated to $Z(d,r)$ by
\begin{equation}\label{eqn:defw}
    \begin{split}
        w^{(d,r)}(i) := 
\begin{cases}
    d+i &\text{if }0<i\leq r,\\
    \overline{i-r} &\text{if }i>r.
\end{cases}
    \end{split}
\end{equation}
\begin{prop}
For all $d>0$ and $r\geq 0$, $f(Z(d,r)) = \Inv(w^{(d,r)})$.
\end{prop}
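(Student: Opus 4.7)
The plan is to prove set equality by establishing the inclusion $f(Z(d,r)) \subseteq \Inv(w^{(d,r)})$ together with equality of cardinalities. The key inputs are the explicit description of $w = w^{(d,r)}$ and the three-case definition~(\ref{eqn:defrootlabel}) of $f$.

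First I would compute $w^{-1}$ directly from~(\ref{eqn:defw}): using $w^{-1}(-k) = -w^{-1}(k)$, one finds $w^{-1}(k) = -(r+k)$ for $1 \leq k \leq d$ and $w^{-1}(k) = k-d$ for $d+1 \leq k \leq d+r$. Then for each of the three cases of~(\ref{eqn:defrootlabel}) I would verify that $f(i,j)$ is a positive root (straightforward, since row $i$ of $Z(d,r)$ occupies columns $i-d,\ldots,r+d-i$) and that $w^{-1}f(i,j)$ is negative. For instance, in Case~2 a direct computation gives $w^{-1}f(i,j) = -e_{r+d+1-i}+e_j$, which is a negative root precisely because $j\leq r < r+d+1-i$ (using $1 \leq i \leq d$). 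The remaining cases are analogous.

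Next I would show $f$ is injective. The three cases of~(\ref{eqn:defrootlabel}) produce roots of disjoint ``types'': Case~1 yields singletons $e_{d+1-i}$ (when $j=0$) and sums $e_a+e_b$ with $1\leq b<a\leq d$ (when $j<0$); Case~2 yields sums $e_a+e_b$ where exactly one index lies in $\{d+1,\ldots,d+r\}$ and the other in $\{1,\ldots,d\}$; Case~3 yields differences $e_a-e_b$ with $1\leq b<a\leq d$. Within each case, reading off the subscripts recovers $(i,j)$ uniquely, so $f$ is injective.

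Finally, cardinalities match: $|Z(d,r)| = \sum_{i=1}^d (r+2d-2i+1) = d(d+r)$. To see $|\Inv(w^{(d,r)})| = d(d+r)$ as well, I would enumerate the positive roots \emph{outside} the image of $f$, namely $e_k$ for $d<k\leq d+r$, sums $e_a+e_b$ with both $a,b\in\{d+1,\ldots,d+r\}$, and differences $e_a-e_b$ with $a\in\{d+1,\ldots,d+r\}$, and verify via the explicit $w^{-1}$ that each such root lies in $\Phi^+\cap w\Phi^+$. Combined with the established inclusion and injectivity, this forces $f(Z(d,r)) = \Inv(w^{(d,r)})$. The main obstacle is not conceptual but organizational: one must carefully track which index inequalities (e.g.\ $j-r < d+1-i$ in Case~3) follow from the trapezoid's column bounds and confirm that the six ``types'' of positive roots partition correctly between $\Inv(w)$ and its complement.
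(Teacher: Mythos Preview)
Your proof is correct. Both your argument and the paper's are direct verifications, but they are organized differently. The paper first characterizes $\Inv(w^{(d,r)})$ in one stroke---using the criterion $e_i-e_j\in\Inv(w)\iff w^{-1}(i)<w^{-1}(j)$ (with the convention $e_{-k}=-e_k$) to show that $e_i-e_j\in\Inv(w^{(d,r)})$ precisely when $|j|\leq d$---and then simply matches this description against the three cases of~\eqref{eqn:defrootlabel}. You instead prove the inclusion $f(Z(d,r))\subseteq\Inv(w^{(d,r)})$ case by case, check injectivity, and then handle the complement. One remark: your final ``cardinality'' step is really a disguised proof of the reverse inclusion. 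Once you verify that every positive root outside $f(Z(d,r))$ lies in $\Phi^+\cap w\Phi^+$, you have shown $\Inv(w^{(d,r)})\subseteq f(Z(d,r))$ directly, and neither the computation $|Z(d,r)|=d(d+r)$ nor the injectivity of $f$ is needed. Streamlining this would bring your argument close to the paper's in length.
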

\begin{proof}
    Since for $i>0$ and $j<i$ such that $j\neq \bar i$, $e_i-e_j\in \Inv(w)$ if and only if $w^{-1}(i)<w^{-1}(j)$. By \eqref{eqn:defw}, we have
    \begin{equation}\label{eqn:Mar6aaa}
        e_i-e_j\in \Inv(w^{(d,r)})\iff i\in [d+r],|j|\leq d, i>j\neq -i.
    \end{equation}
    Since $e_i\in \Inv(w)\iff w^{-1}(i)<0$ for all $i>0$, we get 
    \begin{equation}\label{eqn:Mar6bbb}
        e_i\in \Inv(w^{(d,r)})\iff i\in [d].
    \end{equation} 
    We are then done by comparing \eqref{eqn:Mar6aaa} and \eqref{eqn:Mar6bbb} with \eqref{eqn:defrootlabel}.
\end{proof}

The labeling $f$ can also be extended to a labeling $\tilde{f}:\tilde{Z}(d,r)\to \Phi^+(B_{d+r})$ where
\[\tilde{Z}(d,r)=Z(d,r)\cup\{(1,\bar d),(2,\overline{d-1}),\ldots(d,\bar1)\}\]
is the extended shape of $Z(d,r)$ with $d$ extra boxes as defined in Section~\ref{sec:prelim}. The extended labeling is given by
\[\tilde{f}(i,j)=\begin{cases}
    2e_{d+1-i} &\text{if }j=\overline{d+1-i},\\
    f(i,j) &\text{otherwise}.
\end{cases}\]

\begin{ex}\label{ex:45123}
For $d = 3$ and $r = 2$, we have $Z(d,r) = (7,5,3)$ and $w^{(3,2)} = 45\bar{1}\bar{2}\bar{3}\in W(B_5)$. See Figure~\ref{fig:d=3,r=2} for the extended labeling $\tilde f$ in this case.
\begin{figure}[h!]
\centering
\ytableausetup{boxsize=3em}
\begin{ytableau}
\none[2e_3] & e_3{+}e_2 & e_3{+}e_1 & e_3 & e_4{+}e_3 & e_5{+}e_3 & e_3{-}e_1 & e_3{-}e_2 \\
\none & \none[2e_2] & e_2{+}e_1 & e_2 & e_4{+}e_2 & e_5{+}e_2 & e_2{-}e_1\\
\none & \none & \none[2e_1] & e_1 & e_4{+}e_1 & e_5{+}e_1
\end{ytableau}
\caption{The extended labeling $\tilde{f}$ of $\tilde{Z}(3,2)$}
\label{fig:d=3,r=2}
\end{figure}
\end{ex}
The filling of a balanced shifted tableaux $B\in\BS(Z(d,r))$ can be viewed as a map $B:Z(d,r)\to\mathbb{N}$ by sending a box to its entry. Then the composition $Bf^{-1}:\Inv(w^{(d,r)})\to \mathbb{N}$ encodes an ordering of the roots in $\Inv(w^{(d,r)})$. We will show that this actually gives a reflection order in $\ro(w^{(d,r)})$.
\begin{prop}\label{prop:root-order-balanced}
The map $B\mapsto Bf^{-1}$ is a bijection between $\BS(Z(d,r))$ and $\ro(w^{(d,r)})$, and thus induces a bijection between $\BS(Z(d,r))$ and $\Red(w^{(d,r)})$.
\end{prop}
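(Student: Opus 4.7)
The plan is to apply Proposition~\ref{prop:root-ordering} cell-by-cell: I verify that a filling $B$ of $Z(d,r)$ is balanced at every cell if and only if the induced ordering $Bf^{-1}$ of $\Inv(w^{(d,r)})$ satisfies the triple conditions. The roots in $\Inv(w^{(d,r)})$ fall into four classes matching the piecewise definition of $f$: short roots $e_A$ (column~$0$); long differences $e_A-e_C$ with $A,C\in[d]$ (columns~$>r$); long sums $e_A+e_C$ with $A,C\in[d]$ (columns~$<0$); and long sums $e_A+e_{d+k}$ with $k\in[r]$ (columns~$1,\ldots,r$). For each class I first list the labels of all cells in the extended hook $\tilde H(f^{-1}(\gamma))$.

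For each $\gamma$-cell I then enumerate the triples $(\alpha,\beta,\alpha+\beta)$ in $\Phi^+(B_{d+r})$ that connect $\gamma$ to another cell of $\tilde H(f^{-1}(\gamma))$. A triple with $\gamma=\alpha+\beta$ and $\beta\notin\Inv(w^{(d,r)})$ yields a Condition-1 constraint forcing one specific hook cell to be after $\gamma$; a triple with $\gamma=\alpha+\beta$ and $\beta\in\Inv(w^{(d,r)})$ yields a Condition-2 constraint pairing two hook cells, with exactly one required after $\gamma$. Further triples in which $\gamma$ is a summand and whose sum lies in $\gamma$'s hook contribute additional Condition-1 constraints. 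The key accounting is: under all triple conditions, the total number of hook cells appearing after $\gamma$ equals $\rk(f^{-1}(\gamma))-1$, which is precisely the balance condition. This yields $Bf^{-1}\in\ro(w^{(d,r)})\Rightarrow B\in\BS(Z(d,r))$ by counting; the converse is obtained by the same bookkeeping run in reverse.

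The main obstacle is the class of long sums $e_A+e_C$ with $A,C\in[d]$. The extended hook here includes the extra cell $(d+1-C,-C)$ with formal label $2e_C$, which is \emph{not} a root in $\Phi(B_{d+r})$; its value is duplicated from $B(d+1-C,0)$. Consequently $\gamma=e_A+e_C$ and the root $e_A-e_C$ are not related by any single type-$B$ triple, since $(e_A+e_C)-(e_A-e_C)=2e_C$ is not a root. Their relative order is instead forced jointly by the pair of triples $(e_A,e_C,e_A+e_C)$ and $(e_A-e_C,e_C,e_A)$: together they guarantee that exactly one of ``$e_C$ after $\gamma$'' and ``$e_A-e_C$ after $\gamma$'' holds, contributing the extra $+1$ needed to match $\rk(f^{-1}(\gamma))-1$. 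Reconciling this ``mediated'' constraint with the balance count is the technical crux.

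Having established the bijection $\BS(Z(d,r))\leftrightarrow\ro(w^{(d,r)})$ via $B\mapsto Bf^{-1}$, the second bijection with $\Red(w^{(d,r)})$ is immediate from the definition of $\ro$: the map $\mathbf{a}\mapsto\ro(\mathbf{a})$ is by construction a bijection $\Red(w^{(d,r)})\to\ro(w^{(d,r)})$.
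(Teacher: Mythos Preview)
Your forward direction (reflection order $\Rightarrow$ balanced) is essentially the paper's easy direction and is correct: the extended hook $\tilde H(f^{-1}(\gamma))\setminus\{\gamma\}$ partitions into condition-2 pairs (each contributing exactly one cell after $\gamma$), condition-1 singletons (each after $\gamma$), and in the left-staircase case the residual pair $\{e_A-e_C,\,2e_C\}$ which your mediated argument handles correctly. The count then matches $\rk-1$. Your treatment of the non-root label $2e_C$ via the two triples $(e_A,e_C,e_A+e_C)$ and $(e_A-e_C,e_C,e_A)$ is exactly the right observation.

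The gap is the converse. Knowing that the \emph{number} of hook cells after $\gamma$ equals $\rk(f^{-1}(\gamma))-1$ does not imply that each condition-2 pair contributes exactly one and each condition-1 singleton is after $\gamma$: one could have both members of one pair after $\gamma$ and neither member of another pair after, and the count would still come out right while the triple condition fails for both pairs. ``Bookkeeping in reverse'' does not close this; the balance condition is an equation on a sum, whereas the triple conditions constrain individual summands. The paper resolves this by proving the stronger Lemma~\ref{lm:balance-equiv}: balanced at every cell implies the specific \emph{strongly balanced} inequalities (which are exactly the triple constraints) at every cell. The argument is induction on $|\tilde H(\alpha)|$: if the triple condition fails at $\alpha=e_A-e_C$, say both $Bf^{-1}(e_A-e_k)$ and $Bf^{-1}(e_k-e_C)$ lie on one side of $Bf^{-1}(\alpha)$ while both $Bf^{-1}(e_A-e_l)$ and $Bf^{-1}(e_l-e_C)$ lie on the other, one looks at the auxiliary root $e_{\max(k,l)}-e_{\min(k,l)}$, whose hook is strictly smaller, and derives a contradiction from the inductively established strongly balanced condition there. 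An analogous but more delicate induction handles the left staircase and rectangle cases. This inductive mechanism is the missing idea in your plan, and some argument of this kind is genuinely required.
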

To prove Proposition~\ref{prop:root-order-balanced}, we need the following technical lemmas.

\begin{lemma}\label{lm:same-col-order}
    Given $B\in \BS(\lambda)$ and $j\geq 0$, if column $j$ and $j+1$ have the same length in $B$, then $B(i,j)<B(i,j+1)$ for all $i$.
\end{lemma}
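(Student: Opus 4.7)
The plan is to induct on the row index from the bottom of the column upwards, after recasting the balanced condition in a more convenient form. For a balanced shifted tableau $B$ and a box $(i,j)$ with $j \geq 0$, let $a^<(i,j)$ be the number of arm boxes (boxes in row $i$ strictly to the right of $(i,j)$) whose entries are smaller than $B(i,j)$, and let $\ell^>(i,j)$ be the number of leg boxes (boxes in column $j$ strictly below $(i,j)$) whose entries are larger than $B(i,j)$. Since $\tilde H(i,j) = H(i,j)$ when $j \geq 0$ and $\rk(i,j) - 1$ is exactly the arm length of $(i,j)$, the balanced condition asserts that precisely $\rk(i,j) - 1$ entries of $H(i,j) \setminus \{(i,j)\}$ exceed $B(i,j)$, which after rearrangement is equivalent to the identity $a^<(i,j) = \ell^>(i,j)$.

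Let $L$ denote the common length of columns $j$ and $j+1$; I would induct downward on $i$ from $L$ to $1$, proving $B(i,j) < B(i,j+1)$ at each step. In the base case $i = L$, the leg of $(L,j)$ is empty, so $\ell^>(L,j) = 0$ and hence $a^<(L,j) = 0$, whence every arm entry of $(L,j)$ (in particular $B(L,j+1)$) exceeds $B(L,j)$. For the induction step, suppose toward a contradiction that $B(i,j) > B(i,j+1)$. The arm of $(i,j)$ consists of $(i,j+1)$ together with the arm of $(i,j+1)$, and since $B(i,j+1) < B(i,j)$, every entry of the latter arm that is smaller than $B(i,j+1)$ is also smaller than $B(i,j)$; this yields $a^<(i,j) \geq 1 + a^<(i,j+1)$. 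For the legs, the inductive hypothesis gives $B(i',j) < B(i',j+1)$ for each $i' \in \{i+1, \ldots, L\}$, so whenever $B(i',j) > B(i,j)$ one obtains $B(i',j+1) > B(i',j) > B(i,j) > B(i,j+1)$, which shows $\ell^>(i,j) \leq \ell^>(i,j+1)$. Invoking $a^< = \ell^>$ at both $(i,j)$ and $(i,j+1)$ rewrites the first inequality as $\ell^>(i,j) \geq 1 + \ell^>(i,j+1)$, contradicting the second.

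The only real conceptual step is the initial reformulation $a^<(i,j) = \ell^>(i,j)$; once this is in place, and one sees that the induction must proceed from the bottom upward so that the leg comparison becomes usable, everything reduces to a short chain of elementary inequalities. The hypothesis $j \geq 0$ is essential here, because it ensures that the extended hook coincides with the ordinary hook and that $\rk$ has the simple arm-length interpretation; for $j < 0$ the extra diagonal box and the row $d-j+1$ appearing in $\tilde H(i,\bar{j})$ would obstruct this clean rearrangement.
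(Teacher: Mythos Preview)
Your proof is correct and follows essentially the same approach as the paper: both induct from the bottom of the column upward and, in the inductive step, assume $B(i,j)>B(i,j+1)$ and use the hypothesis on rows below to transfer hook inequalities from column $j$ to column $j+1$, reaching a rank contradiction. Your identity $a^<(i,j)=\ell^>(i,j)$ is just a clean repackaging of the same count the paper performs directly.
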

\begin{proof}
    We induce on the the number of boxes directly below box $(i,j)$. If there are no boxes below $(i,j)$, then $B(i,j)$ is the minimum in its hook $H(i,j)$ and $B(i,j)<B(i,j+1)$. Now suppose $B(i,j)<B(i,j+1)$ for all $i>k$. Since $B$ is balanced and $j\geq 0$, $B(k,j)$ is smaller than $\rk(k,j)-1$ entries in its hook $H(k,j)$. If $B(k,j)>B(k,j+1)$, since $B(i,j)<B(i,j+1)$ for all $i>k$, we can find at least $\rk(k,j)-1$ entries in $H(k,j+1)$ that is larger than $B(k,j+1)$. This implies that $\rk(k,j+1)\geq\rk(k,j)$, a contradiction. Therefore $B(k,j)<B(k,j+1)$ and we are done by induction.
\end{proof}

\begin{cor}\label{cor:same-col-order}
    For any balanced shifted tableaux $B$ of shape $Z(d,r)$ and $1\leq i\leq d$, 
    \[B(i,0)<B(i,1)<B(i,2)<\cdots<B(i,r).\]
\end{cor}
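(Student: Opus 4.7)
The plan is to deduce this directly from Lemma~\ref{lm:same-col-order} by telescoping, so the only real content is to verify that the hypothesis of the lemma applies to every consecutive pair of columns $j,j{+}1$ with $0\le j<r$ in the trapezoid $Z(d,r)$.

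First I would compute the column lengths of $Z(d,r)$. Since $Z(d,r)=(r+2d-1,r+2d-3,\ldots,r+1)$, we have $\lambda_i=r+2d-2i+1$, and using the shifted-shape convention the $i$-th row occupies columns $i-d$ through $\lambda_i-d+i-1=r+d-i$. Hence column $j$ contains exactly the rows $i$ with
\[1\le i\le\min(j+d,\;r+d-j,\;d).\]
For $0\le j\le r$ both $j+d\ge d$ and $r+d-j\ge d$, so the minimum is $d$. Thus columns $0,1,\ldots,r$ of $Z(d,r)$ all have length exactly $d$.

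Since consecutive columns $j$ and $j+1$ have equal lengths for every $0\le j<r$, Lemma~\ref{lm:same-col-order} (applied to $\lambda=Z(d,r)$) gives $B(i,j)<B(i,j+1)$ for all $1\le i\le d$ and all $0\le j<r$. Concatenating these inequalities yields
\[B(i,0)<B(i,1)<\cdots<B(i,r),\]
which is the desired chain. There is no real obstacle here — the corollary is simply the observation that the ``equal column length'' hypothesis of the lemma is universally satisfied throughout the non-negative columns of a trapezoid.
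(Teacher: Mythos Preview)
Your proof is correct and follows exactly the same approach as the paper: the paper's proof is the single sentence ``This follows from Lemma~\ref{lm:same-col-order} and the fact that the $0^{\text{th}}$ column to the $r^{\text{th}}$ column of $Z(d,r)$ all have the same length $d$.'' You have simply spelled out the easy verification of that column-length fact and the telescoping, which is fine.
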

\begin{proof}
    This follows from Lemma~\ref{lm:same-col-order} and the fact that the $0^\text{th}$ column to the $r^\text{th}$ column of $Z(d,r)$ all have the same length $d$.
\end{proof}

\begin{lemma}[Strongly balanced conditions]\label{lm:balance-equiv}
    A shifted tableaux $B$ of shape $Z(d,r)$ is balanced if and only if the following holds at $f^{-1}(\alpha)$:
    \begin{enumerate}
        \item(Right Staircase) For any $\alpha = e_i-e_j$ where $1\leq j<i\leq d$, 
            $Bf^{-1}(\alpha)$ lies between $Bf^{-1}(e_i-e_k)$ and $Bf^{-1}(e_k-e_j)$ for all $j<k<i$;
        \item(Rectangle) For any $\alpha=e_i+e_p$ where $1\leq i\leq d<p\leq d+r$, 
        \begin{enumerate}
            \item $Bf^{-1}(\alpha)<Bf^{-1}(e_i+e_q)$ for all $p<q\leq d+r$;
            \item $Bf^{-1}(\alpha)$ lies between $Bf^{-1}(e_i-e_k)$ and $Bf^{-1}(e_k+e_p)$ for all $1\leq k<i$;
        \end{enumerate}
        \item(Column $0$) For any $\alpha=e_i$ where $1\leq i\leq d$, 
        \begin{enumerate}
            \item $Bf^{-1}(\alpha)<Bf^{-1}(e_i+e_q)$ for all $d< q\leq d+r$;
            \item $Bf^{-1}(\alpha)$ lies between $Bf^{-1}(e_i-e_k)$ and $Bf^{-1}(e_k)$ for all $1\leq k<i$;
        \end{enumerate}
        \item(Left Staircase) For any $\alpha=e_i+e_j$ where $1\leq j<i\leq d$, 
        \begin{enumerate}
            \item $Bf^{-1}(\alpha)<Bf^{-1}(e_i+e_q)$ for all $d<q\leq d+r$;
            \item $Bf^{-1}(\alpha)<Bf^{-1}(e_j+e_q)$ for all $d<q\leq d+r$;
            \item $Bf^{-1}(\alpha)$ lies between $B\Tilde{f}^{-1}(e_i-e_k)$ and $B\Tilde{f}^{-1}(e_j+e_k)$ for all $-j<k<i$.
        \end{enumerate}
    \end{enumerate}
\end{lemma}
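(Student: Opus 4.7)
The plan is to verify the equivalence by a case analysis on the four types of boxes in $Z(d,r)$ appearing in the lemma. For each box $x=f^{-1}(\alpha)$, I would first write out $\tilde{H}(x)$ explicitly via $\tilde{f}$. A direct inspection of the formulas in \eqref{eqn:defrootlabel} shows that, in each of the four cases, the non-$x$ entries of $\tilde{H}(x)$ organize into two families: \emph{extras}, consisting of labels of the form $e_i+e_q$ (and $e_j+e_q$ in the left-staircase case) with $q>d$, and \emph{pairs} $(\beta,\gamma)$ satisfying $\beta+\gamma=\alpha$ in the sense of the lemma (with the convention that the $k=j$ pair in the left-staircase case involves the extended label $2e_j$, whose $\tilde{f}^{-1}$-value equals the column-$0$ entry $Bf^{-1}(e_j)$). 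A direct count in each case confirms that (number of extras) $+$ (number of pairs) $= \rk(x)-1$.

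Given this decomposition, the direction \emph{strongly balanced implies balanced} reduces to counting: the strongly balanced conditions force every extra to exceed $B(x)$ and every pair to straddle $B(x)$ (one entry $>B(x)$, one $<B(x)$), so the total number of entries of $\tilde{H}(x)$ strictly exceeding $B(x)$ is exactly $\rk(x)-1$, giving the balanced rank. For the converse, the ``extras larger'' conditions (2)(a), (3)(a), (4)(a), and (4)(b) follow from Corollary~\ref{cor:same-col-order} together with a mild extension of Lemma~\ref{lm:same-col-order} into negative columns, proved by the same column-induction argument used there.

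The main obstacle is extracting the ``lies between'' conclusions from the rank condition alone. I plan to proceed by strong induction on the hook size (equivalently, on $\rk(x)$). Assuming strongly balanced is already established at every box with strictly smaller hook, suppose towards contradiction that at $x$ some pair $(\beta_{k_0},\gamma_{k_0})$ fails ``lies between'' with both entries exceeding $B(x)$; the rank count then forces another pair $(\beta_{k_1},\gamma_{k_1})$ whose entries both fall below $B(x)$. In the right-staircase case, after relabeling so $k_0<k_1$, the ``cross'' root $e_{k_1}-e_{k_0}$ lies simultaneously in the hook of $\beta_{k_0}$ and in the hook of $\gamma_{k_1}$; applying the inductive ``lies between'' to the pair $(\beta_{k_1},e_{k_1}-e_{k_0})$ at $\beta_{k_0}$ and to the pair $(e_{k_1}-e_{k_0},\gamma_{k_0})$ at $\gamma_{k_1}$ yields two contradictory strict inequalities on $B$ at this cross root. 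The remaining three cases are handled analogously, with the cross root replaced by the appropriate type-$B$ sum involving $e_{k_0}$ and $e_{k_1}$, and with the $2e_j$ pair in the left-staircase case absorbed via the identification $B\tilde{f}^{-1}(2e_j)=Bf^{-1}(e_j)$ together with the inductive hypothesis at the column-$0$ box $f^{-1}(e_j)$.
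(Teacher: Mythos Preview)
Your overall strategy—organize each extended hook into ``extras'' and ``pairs'', check that their count matches $\rk(x)-1$, deduce the easy direction by counting, and then run a cross-root induction on hook size for the ``lies between'' conditions—is exactly the paper's approach, and your cross-root argument for (1), (2b), (3b) is correct. The gap is in your treatment of (4a) and (4b). These do \emph{not} follow from any extension of Lemma~\ref{lm:same-col-order} into the negative columns: adjacent negative columns of $Z(d,r)$ have different lengths, so the hypothesis of that lemma fails, and in fact its conclusion is false there. In the paper's own balanced tableau at the end of Section~\ref{sec:trapezoid} one has $B(1,-1)=8>7=B(1,0)$, so row monotonicity already breaks between columns $-1$ and $0$. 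Moreover, (4b) compares the entry at $(d{+}1{-}i,-j)$ with entries in row $d{+}1{-}j$, a different row altogether, so no column-induction argument of the Lemma~\ref{lm:same-col-order} type can reach it.

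This gap also infects your ``lies between'' argument for (4c): the sentence ``the rank count then forces another pair whose entries both fall below $B(x)$'' presumes that all extras exceed $B(x)$, which for a left-staircase box is precisely (4a) and (4b). The paper's fix is to prove (4a), (4b), (4c) together inside the single induction on $|\tilde H(\alpha)|$. For (4a) at $\alpha=e_i+e_j$ one argues contrapositively: if some extra $e_i+e_q$ were below $x$, the balanced rank forces some pair $(e_i-e_k,\,e_j+e_k)$ to lie entirely above $x$; one then brings in the cross root $e_k+e_q$ and applies the already-established (2b) at $f^{-1}(e_i+e_q)$ together with the inductive (4a)/(4b) at the strictly smaller hook $f^{-1}(e_j+e_k)$ to obtain contradictory inequalities (the case $k\le 0$ is handled separately via the inductive (4a) or (3a) at $f^{-1}(e_i-e_k)$). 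Once (4a) and (4b) are in hand at $\alpha$, your cross-root argument for (4c) goes through unchanged.
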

\begin{proof}
    For a root $\alpha\in\Inv(w^{(d,r)})$, denote $\Tilde{H}(\alpha)$ as the set of roots in the extended hook of $\alpha$ in $Z(d,r)$ given by $\Tilde{H}(\alpha)=\Tilde{f}(\Tilde{H}(f^{-1}(\alpha)))$. The set $\Tilde{H}(\alpha)-\{\alpha\}$ can be partitioned into pairs or singletons of roots based on the strongly balanced condition at $f^{-1}(\alpha)$. If the strongly balanced condition at $f^{-1}(\alpha)$ is met, then the hook $\Tilde{H}(\alpha)$ must be balanced. If this is true for all $\alpha$, then $B$ is balanced.
    
    Conversely, assume $B$ is balanced, and we will show that the strongly balanced condition holds at $f^{-1}(\alpha)$ for every $\alpha\in\Inv(w^{(d,r)})$. We will proceed by induction on the size of $\Tilde{H}(\alpha)$.
    Although there are eight different statements to prove, it boils down to three cases. We will prove one example for each case as the others hold by similar reasoning. The examples are illustrated in Figure~\ref{fig:proof-lem}.
    
    \emph{Case 1}: statements (1), (2b), (3b), (4c).
    We prove (1) as an example. Consider $\alpha=e_i-e_j$. If $|\tilde{H}(\alpha)| = 1$ or $3$, the statement is clear since $B$ is balanced. Suppose that (1) holds at all $f^{-1}(\beta)$ such that $|\tilde{H}(\beta)|< |\tilde{H}(\alpha)|$ and, for the sake of contradiction, that (1) does not hold at $f^{-1}(\alpha)$. Since $\Tilde{H}(\alpha)$ is balanced, we can find $k,l$ such that $j<k,l<i$ and 
    \[Bf^{-1}(e_i-e_l),Bf^{-1}(e_l-e_j)<x=Bf^{-1}(\alpha)< Bf^{-1}(e_i-e_k), Bf^{-1}(e_k-e_j).\] If $l>k$, we set $y=Bf^{-1}(e_l-e_k)$. By inductive hypothesis, statement (1) holds at $f^{-1}(e_i-e_k)$ and $f^{-1}(e_l-e_j)$. The former implies that $y<x$ and the latter implies that $y>x$, a contradiction. If $k>l$, we set $y = Bf^{-1}(e_k-e_l)$ and a contradiction will be reached by similar reasoning. Therefore we conclude that statement (1) holds. 
    
    
    \emph{Case 2}: statements (2a), (3a). They follow directly from Corollary~\ref{cor:same-col-order}.
    
    \emph{Case 3}: statements (4a), (4b).
    We prove (4a) as an example. Here we will induce on $|\tilde{H}(\alpha)|$ for both (4a) and (4b). Set $x=Bf^{-1}(\alpha)$. We start with the base case in the induction and consider $\alpha = e_1+e_2$. Since $B$ is balanced, $x$ is larger than two entries in $\tilde{H}(\alpha)$. By statement (4c), $x$ lies between $B(e_2)$ and $B(e_1)$ as well as between $B(e_2-e_1)$ and $B(2e_1) = B(e_1)$. Therefore $x$ is smaller than $B(e_2+e_q)$ and $B(e_1+e_q)$ for all $d<q\leq d+r$. 
    
    Suppose that (4a) and (4b) holds at all $f^{-1}(\beta)$ such that $|\tilde{H}(\beta)|<|\tilde{H}(\alpha)|$. Suppose further, for the sake of contradiction, that $x$ is larger than $Bf^{-1}(e_i+e_q)$ for some $q$. Since $\Tilde{H}(\alpha)$ is balanced, $x$ must be smaller than both $Bf^{-1}(e_i-e_k)$ and $Bf^{-1}(e_j+e_k)$ for some $k$. If $k\leq 0$, statement (4a) at $f^{-1}(e_i-e_k)$ implies that $Bf^{-1}(e_i-e_k)<Bf^{-1}(e_i+e_q)$, contradicting our inductive hypothesis. If $k>0$, consider $y=Bf^{-1}(e_k+e_q)$. Since Statement (2b) holds at $f^{-1}(e_i+e_q)$,  we have $y<x$. Since statement (4a) and (4b) holds at $f^{-1}(e_j+e_k)$, we get $y>x$, a contradiction. We can then conclude that (4a) holds.
    \begin{figure}[h!]
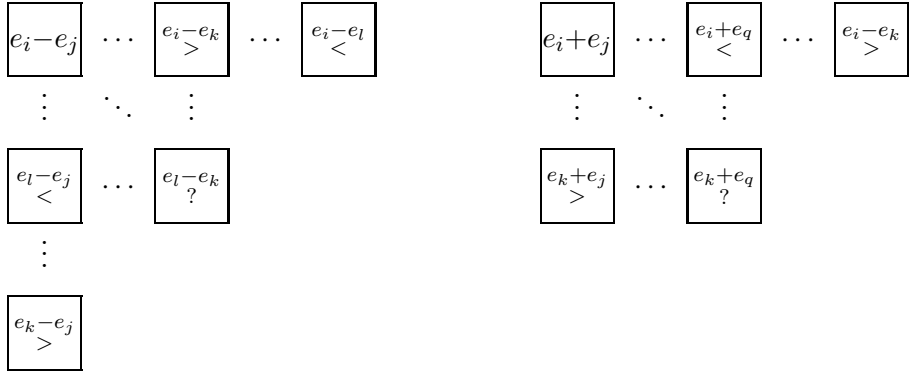

    \centering
    \ytableausetup{boxsize=2.5em}
    \begin{ytableau}
    e_i{-}e_j & \none[\cdots] & \substack{e_i{-}e_k\\>} & \none[\cdots] & \substack{e_i{-}e_l\\<}\\
    \none[\vdots] & \none[\ddots] & \none[\vdots]\\
    \substack{e_l{-}e_j\\<} & \none[\cdots] & \substack{e_l{-}e_k\\?}\\
    \none[\vdots]\\
    \substack{e_k{-}e_j\\>}
    \end{ytableau}
    \hspace{5em}
    \begin{ytableau}
    e_i{+}e_j & \none[\cdots] & \substack{e_i{+}e_q\\<} & \none[\cdots] & \substack{e_i{-}e_k\\>}\\
    \none[\vdots] & \none[\ddots] & \none[\vdots]\\
    \substack{e_k{+}e_j\\>} & \none[\cdots] & \substack{e_k{+}e_q\\?}
    \end{ytableau}
    \caption{Proof ideas for Case 1 and Case 3 of Lemma~\ref{lm:balance-equiv}}
    \label{fig:proof-lem}
    \end{figure}
\end{proof}

\begin{ex}
Let $d=3$ and $r=2$. Figure~\ref{fig:balance-equiv-ex} illustrates an example of the strongly balanced conditions of Lemma~\ref{lm:balance-equiv} at the box $(1,-1)$ (or at root $\alpha=e_3+e_1$), labeled by $\ast$ in the diagram. Its extended hook is colored yellow. The strongly balanced conditions for box $\ast$ are
\begin{itemize}
    \item \underline{Condition (4a) and (4b):} Box $\ast$ is smaller than the boxes labeled by ${+}$;
    \item \underline{Condition (4c):} Box $\ast$ lies between the pair of $a$'s, pair of $b$'s and pair of $c$'s.
\end{itemize}
\begin{figure}[h!]
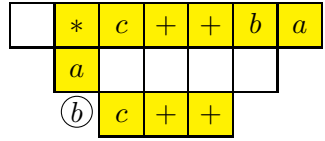

\centering
\ytableausetup{boxsize=1.5em}
\begin{ytableau}
{} & *(yellow)\ast & *(yellow)c & *(yellow){+} & *(yellow){+} & *(yellow)b & *(yellow)a\\
\none & *(yellow)a & {} & {} & {} & {}\\
\none & \none[\circled{b}] & *(yellow)c & *(yellow){+} & *(yellow){+}
\end{ytableau}
\caption{An example of strongly balanced condition for box $\ast$}
\label{fig:balance-equiv-ex}
\end{figure}
\end{ex}
\begin{proof}[Proof of Proposition ~\ref{prop:root-order-balanced}]
    This follows from Lemma~\ref{lm:balance-equiv} since the conditions on $Bf^{-1}$ in the lemma are exactly the conditions for $Bf^{-1}$ being a reflection order in Proposition~\ref{prop:root-ordering}.
\end{proof}

Since there is a natural bijection between $\ro(w)$ and $\Red(w)$,
Proposition ~\ref{prop:root-order-balanced} implies:
\begin{cor}\label{cor:reduced-word-balanced}
The map $B\mapsto \ro^{-1}(Bf^{-1})$ is a bijection between $\BS(Z(d,r))$ and $\Red(w^{(d,r)})$. 
\end{cor}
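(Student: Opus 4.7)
The plan is to obtain the corollary by composing two bijections: the one established in Proposition~\ref{prop:root-order-balanced} and the classical bijection between reduced words of $w$ and reflection orderings of $\Inv(w)$. Since Proposition~\ref{prop:root-order-balanced} already does the substantive combinatorial work of identifying balanced fillings of $Z(d,r)$ with orderings of the root system, essentially all that remains is to check that $\ro:\Red(w)\to \ro(w)$ is itself a bijection, so that its inverse $\ro^{-1}$ can be applied.

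First, I would recall Proposition~\ref{prop:root-order-balanced}: the map $B\mapsto Bf^{-1}$ is a bijection from $\BS(Z(d,r))$ to $\ro(w^{(d,r)})$. Then I would verify the classical fact that for any $w\in W(\Phi)$, the map $\ro:\Red(w)\to \ro(w)$ sending $\mathbf{a}=(a_1,\dots,a_{\ell(w)})$ to its reflection ordering $\gamma_1,\dots,\gamma_{\ell(w)}$ with $\gamma_j=s_{a_1}\cdots s_{a_{j-1}}\alpha_{a_j}$ is a bijection. Surjectivity is immediate from the definition of $\ro(w)$. For injectivity, one can recover $\mathbf{a}$ from $(\gamma_1,\dots,\gamma_{\ell(w)})$ inductively: since $\gamma_1=\alpha_{a_1}$ is simple, $a_1$ is determined; then since $\alpha_{a_j}=s_{a_{j-1}}\cdots s_{a_1}\gamma_j$ must be a simple root, the index $a_j$ is forced at each subsequent step.

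Finally, I would conclude by composing these two bijections: $B\mapsto Bf^{-1}\mapsto \ro^{-1}(Bf^{-1})$ gives a bijection $\BS(Z(d,r))\to \Red(w^{(d,r)})$, which is precisely the statement of the corollary.

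There is no real obstacle here; the corollary is essentially a packaging statement, with all the content already contained in Lemma~\ref{lm:balance-equiv} and Proposition~\ref{prop:root-order-balanced}. The only minor point worth being explicit about is the injectivity of $\ro$, which is standard but deserves a one-line justification as above.
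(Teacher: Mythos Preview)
Your proposal is correct and follows exactly the paper's approach: the paper simply notes that there is a natural bijection between $\ro(w)$ and $\Red(w)$ and then invokes Proposition~\ref{prop:root-order-balanced}. Your version adds a short explicit justification of the injectivity of $\ro$, which the paper takes for granted, but otherwise the argument is identical.
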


\begin{ex}
Assume we started with the following balanced tableaux of shape $Z(3,2)$.
\begin{center}
\ytableausetup{boxsize=1.5em}
$B=$
\begin{ytableau}
4 & 8 & 7 & 10 & 13 & 5 & 15\\
\none & 3 & 2 & 6 & 9 & 1\\
\none & \none & 11 & 12 & 14
\end{ytableau}
\end{center}
\end{ex}
The corresponding reflection order $Bf^{-1}$ is given as follows:
\[
\begin{tikzcd}[row sep = small]
12345 \arrow[r, "e_2-e_1"]& 21345 \arrow[r, "e_2"] & \bar21345 \arrow[r, "e_2+e_1"] & 1\bar2345 \arrow[r, "e_3+e_2"] & 13\bar245 \arrow[r, "e_3-e_1"] & 31\bar245\\
\arrow[r, "e_4+e_2"]& 314\bar25 \arrow[r, "e_3"] & \bar314\bar25 \arrow[r, "e_3+e_1"] & 1\bar34\bar25 \arrow[r, "e_5+e_2"] & 1\bar345\bar2 \arrow[r, "e_4+e_3"] & 14\bar35\bar2\\
\arrow[r, "e_1"]& \bar14\bar35\bar2 \arrow[r, "e_4+e_1"] & 4\bar1\bar35\bar2 \arrow[r, "e_5+e_3"] & 4\bar15\bar3\bar2 \arrow[r, "e_5+e_1"] & 45\bar1\bar3\bar2 \arrow[r, "e_3-e_2"] & 45\bar1\bar2\bar3.\\
\end{tikzcd}
\]
Therefore, we can read off a reduced word $\mathbf{a}$ of $w^{(3,2)}=45\bar1\bar2\bar3$ as
\[\mathbf{a}=\ro^{-1}(Bf^{-1})=101213014201324\in\Red(w^{(3,2)}).\]
\section{Bijection between \texorpdfstring{$\SYT(Z(d,r))$}{} and \texorpdfstring{$\Red(w^{(d,r)})$}{} via Kra\'skiewicz's insertion}\label{sec:kraskiewicz}
\subsection{Kra\'skiewicz's insertion algorithm}
We will follow the notations as recorded in Section~1.3 of \cite{Lam1995thesis}. For a shifted tableaux $T$ of shape $\lambda = (\lambda_1,\ldots,\lambda_d)$, define $\pi(T) = T_d T_{d-1},\ldots,T_1$ to be the reading word of $T$ obtained by reading left to right along rows and from bottom to top, where $T_i$ represents the $i$-th row. For a unimodal sequence of integers
\[\mathbf{R} = (r_1>r_2>\ldots>r_k<r_{k+1}<\ldots<r_m),\]
we define the decreasing part of $\mathbf{R}$ to be
\[\mathbf{R}^{\downarrow} = (r_1>r_2>\ldots>r_k),\]
and the increasing part of $\mathbf{R}$ to be
\[\mathbf{R}^{\uparrow} = (r_{k+1}<r_{k+2}<\ldots<r_m).\]
Note that we include the minimal integer of the sequence in $\mathbf{R}^{\downarrow}$.

Let $w\in W(B_n)$ and $\mathbf{a} = a_1a_2\ldots a_{\ell(w)}\in \Red(w)$, we define the \emph{Kra\'skiewicz's insertion algorithm} recursively.
Set $(P^{(0)},Q^{(0)}) := (\emptyset,\emptyset)$, for any $i\in [\ell(w)]$, define the insertion
\[(P^{(i-1)},Q^{(i-1)}) \leftarrow a_i =: (P^{(i)},Q^{(i)})\]
as follows:

Step 1: Set $\mathbf{R}$ to be the first row of $P^{(i-1)}$ and $a = a_i$.

Step 2: Insert $a$ into $\mathbf{R}$ as follows:
\begin{itemize}
    \item Case 0 ($\mathbf{R} = \emptyset$): Insert $a$ into the left-most box of the row to obtain $P^{(i)}$. Then define $Q^{(i)}$ from $Q^{(i-1)}$ by adding $i$ to the unique box in $P^{(i)}/P^{(i-1)}$. Stop.
    \item Case 1 ($\mathbf{R}a$ is unimodal): Append $b_i$ to the right of $\mathbf{R}$ and to obtain $P^{(i)}$. Then define $Q^{(i)}$ from $Q^{(i-1)}$ by adding $i$ to the unique box in $P^{(i)}/P^{(i-1)}$. Stop.
    \item Case 2 ($\mathbf{R}a$ is not unimodal): Let $b$ be the smallest number in $\bf{R}^{\uparrow}$ such that $b\geq a$.
        \begin{itemize}
            \item Case 2.0 ($a=0$ and $\mathbf{R}$ contains $101$ as a subsequence): We leave $\mathbf{R}$ unchanged and return to start of Step 2 with $a= 0$ and $\mathbf{R}$ equals the next row.
            \item Case 2.1.1 ($b\neq a$): Replace $b$ with $a$ and set $c=b$. 
            \item Case 2.1.2 ($b=a$): Keep $\mathbf{R}^{\uparrow}$ unchanged and set $c = a+1$.
        \end{itemize}
    We now insert $c$ into $\mathbf{R}^{\downarrow}$. Let $d$ be the largest integer such that $d\leq c$. This number always exists since $\mathbf{R}^{\downarrow}$ contains the smallest number in the row. 
        \begin{itemize}
            \item Case 2.1.3 ($d\neq c$): Replace $d$ with $c$ and set $a' = d$.
            \item Case 2.1.4 ($d=c$): Keep $\bf{R}^{\downarrow}$ unchanged and set $a' = c-1$.
        \end{itemize}
\end{itemize}

Step 3: Repeat Step 2 with $a = a'$ and $\bf{R}$ the next row.

Define $P(\mathbf{a}) = P^{(\ell(w))}$ to be the \emph{insertion tableau} and $Q(\mathbf{a}) = Q^{(\ell(w))}$ to be the \emph{recording tableau}. 

\begin{ex}
Let $w = w^{(3,2)} = 45\bar{1}\bar{2}\bar{3}$ as in Example~\ref{ex:45123}. Consider the reduced word $\mathbf{a} = 010121012342312 \in \Red(w)$. Following the above insertion algorithm, we obtain
\begin{center}
\ytableausetup{boxsize=1em}
$P^{(0)} = \emptyset \xrightarrow{0}$
\begin{ytableau}
0
\end{ytableau}
$\xrightarrow{1}$ 
\begin{ytableau}
0 & 1
\end{ytableau}
$\xrightarrow{0}$
\begin{ytableau}
1 & 0\\
\none & 0
\end{ytableau}
$\xrightarrow{1}$
\begin{ytableau}
1 & 0 & 1\\
\none & 0
\end{ytableau}
$\xrightarrow{2}$
\begin{ytableau}
1 & 0 & 1 & 2\\
\none & 0
\end{ytableau}
$\xrightarrow{1}$
\begin{ytableau}
2 & 0 & 1 & 2\\
\none & 0 & 1
\end{ytableau}
$\xrightarrow{0}$
\begin{ytableau}
2 & 1 & 0 & 2\\
\none & 1 & 0\\
\none & \none &0
\end{ytableau}
\vspace{0.2cm}
$\xrightarrow{1}$
\begin{ytableau}
2 & 1 & 0 & 1\\
\none & 1 & 0 & 1\\
\none & \none &0
\end{ytableau}
$\xrightarrow{2}$
\begin{ytableau}
2 & 1 & 0 & 1 & 2\\
\none & 1 & 0 & 1\\
\none & \none &0
\end{ytableau}
\vspace{0.2cm}
$\xrightarrow{3}$
\begin{ytableau}
2 & 1 & 0 & 1 & 2 & 3\\
\none & 1 & 0 & 1\\
\none & \none &0
\end{ytableau}
$\xrightarrow{4}$
\begin{ytableau}
2 & 1 & 0 & 1 & 2 & 3 & 4\\
\none & 1 & 0 & 1\\
\none & \none &0
\end{ytableau}
$\xrightarrow{2}$
\begin{ytableau}
3 & 1 & 0 & 1 & 2 & 3 & 4\\
\none & 1 & 0 & 1 & 2\\
\none & \none &0
\end{ytableau}
$\xrightarrow{3}$
\begin{ytableau}
4 & 1 & 0 & 1 & 2 & 3 & 4\\
\none & 1 & 0 & 1 & 2 & 3\\
\none & \none &0
\end{ytableau}
$\xrightarrow{1}$
\begin{ytableau}
4 & 2 & 0 & 1 & 2 & 3 & 4\\
\none & 2 & 0 & 1 & 2 & 3\\
\none & \none &0 & 1
\end{ytableau}
$\xrightarrow{2}$
\begin{ytableau}
4 & 3 & 0 & 1 & 2 & 3 & 4\\
\none & 3 & 0 & 1 & 2 & 3\\
\none & \none &0 & 1 & 2
\end{ytableau}.
\end{center}
We then have
\[\ytableausetup{boxsize=1.1em}
P(\mathbf{a}) = \begin{ytableau}
4 & 3 & 0 & 1 & 2 & 3 & 4\\
\none & 3 & 0 & 1 & 2 & 3\\
\none & \none &0 & 1 & 2
\end{ytableau},\ 
Q(\mathbf{a}) = \begin{ytableau}
1 & 2 & 4 & 5 & 9 & 10 & 11\\
\none & 3 & 6 & 8 & 12 & 13\\
\none & \none &7 & 14 & 15
\end{ytableau},
\]
and the reading word $\pi(P(\mathbf{a})) = 012301234301234$.
\end{ex}

\begin{defin}\label{def:SDT}
    For a shifted tableaux $T$ with $m$ rows, we say $T$ is a \emph{standard decomposition tableaux} of $w\in W(B_n)$ if
    \begin{enumerate}
        \item $\pi(T) = T_m T_{m-1},\ldots,T_1$ is a reduced word of $w$,
        \item $T_{i}$ is a unimodal subsequence of maximal length in $T_{m}T_{m-1}\ldots T_{i}$.
    \end{enumerate}
Define the set of all such tableaux to be $\SDT(w)$.
\end{defin}

\begin{theorem}[Theorem~5.2, \cite{K89}]\label{thm:insertion}
    The Kra\'skiewicz's insertion gives a bijection between $\{\mathbf{a}\in \Red(w)\}$ and the pairs of tableaux $(P(\mathbf{a}),Q(\mathbf{a}))$ where $P(\mathbf{a})\in \SDT(w)$ and $Q(\mathbf{a})$ is a standard tableaux of the same shape.
\end{theorem}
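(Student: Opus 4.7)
The plan is to prove this by induction on $i$, maintaining three simultaneous invariants on the pair $(P^{(i)}, Q^{(i)})$ through each insertion step: (a) $P^{(i)}$ is a shifted tableau whose rows are unimodal; (b) the reading word $\pi(P^{(i)})$ is a reduced word for $w_i := s_{a_1} \cdots s_{a_i}$, and each row is a maximal unimodal subword of the remaining reading word, so that $P^{(i)} \in \SDT(w_i)$; (c) $Q^{(i)}$ is a standard shifted tableau of the same shape as $P^{(i)}$. The base case $i=0$ is trivial.

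For the inductive step, the shape-and-unimodality invariant (a) and the standardness of $Q^{(i)}$ in (c) follow from the fact that each insertion either appends one new box (Cases 0 and 1) or leaves the shape of the current row unchanged while bumping a single letter to the next row (Cases 2.0 and 2.1.1--2.1.4). In either outcome, $Q^{(i)}$ gains exactly one outer corner labelled $i$, and $i$ exceeds all previous labels, preserving standardness. The nontrivial piece is invariant (b): one must verify that the bumping replacement within a row $\mathbf{R}$, followed by the insertion of the bumped letter $a'$ into the next row, corresponds to a sequence of Coxeter equivalences (commutation relations and the type $A$ braid $s_i s_{i+1} s_i = s_{i+1} s_i s_{i+1}$) that transforms $\pi(P^{(i-1)}) \cdot a_i$ into $\pi(P^{(i)})$ while preserving both length and the underlying signed permutation. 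This must be checked case by case, and crucially relies on the type $B$ braid identity $s_0 s_1 s_0 s_1 = s_1 s_0 s_1 s_0$ for the subtle Case 2.1.2 with $a=0$ and for Case 2.0 where the $101$ substring triggers a drop without substitution.

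For injectivity, we construct an inverse: the unique box of $Q^{(\ell(w))}$ labelled $\ell(w)$ is an outer corner that identifies the row where insertion terminated. We reverse the bumping from that row upward, at each stage identifying the unique letter that could have produced the current row from a legal $\mathbf{R}^{\downarrow}$-$\mathbf{R}^{\uparrow}$ split. The unimodality of rows, together with the requirement that the row's minimum lies in $\mathbf{R}^{\downarrow}$, forces the reverse choice at each step, giving a well-defined inverse. For surjectivity, given any $P \in \SDT(w)$ and any standard shifted tableau $Q$ of the same shape, iterating the reverse procedure produces a word $\mathbf{a}$ whose forward insertion recovers $(P, Q)$; that $\mathbf{a} \in \Red(w)$ then follows from invariant (b) applied in reverse.

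The main obstacle will be the Coxeter-equivalence verification in invariant (b), particularly the interaction of the generator $s_0$ with the bumping rules in Cases 2.0 and 2.1.2. These are precisely the places where type $B$ Edelman--Greene departs from the type $A$ theory, and a uniform argument requires explicitly identifying the braid or commutation pattern that each bumping performs on the reading word and checking that the bumped letter $a'$ together with the modified row encode the same signed permutation with the same length. Once this verification is complete, both directions of the bijection follow cleanly from the invariants, and the characterization of the image as $\{(P,Q) : P \in \SDT(w),\ Q \in \SYT(\sh(P))\}$ is immediate.
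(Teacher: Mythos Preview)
The paper does not prove this theorem at all: it is quoted verbatim as Theorem~5.2 of Kra\'skiewicz~\cite{K89} and used as a black box. There is therefore no ``paper's own proof'' to compare against; the authors simply cite the result and move on.

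Your outline is a reasonable high-level sketch of how Kra\'skiewicz's original argument (and Lam's exposition in~\cite{Lam1995thesis}) actually proceeds: induction on the number of letters inserted, maintaining that $P^{(i)}\in\SDT(w_i)$ and $Q^{(i)}$ is standard of the same shape, with the real work being the Coxeter-equivalence check in each bumping case. That said, as written it is only a plan, not a proof. The case analysis you allude to (especially Cases~2.0 and~2.1.2 involving the $s_0 s_1 s_0 s_1$ braid) is genuinely delicate, and the claim that ``unimodality of rows forces the reverse choice at each step'' hides a nontrivial verification that the reverse bumping is well-defined and inverse to forward bumping in every case. If you intend to supply a self-contained proof rather than a citation, you would need to carry out those case checks explicitly; otherwise, simply citing~\cite{K89} (or~\cite{Lam1995thesis}) as the paper does is the appropriate move.
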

\begin{cor}[Section~6, \cite{K89}]
For any $w\in W(B_n)$,
\[|\Red(w)| = \sum_{P\in \SDT(w)} f^{\sh(P)}.\]
\end{cor}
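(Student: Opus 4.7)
The plan is to deduce the cardinality formula directly from the bijection of Theorem~\ref{thm:insertion}, so essentially no new combinatorial work is required beyond unpacking what that bijection says about fibers.

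First, I would invoke Theorem~\ref{thm:insertion} to write
\[
\Red(w) \;\longleftrightarrow\; \bigl\{(P,Q)\,:\, P\in\SDT(w),\ Q\in\SYT(\sh(P))\bigr\},
\]
with the correspondence $\mathbf{a}\mapsto(P(\mathbf{a}),Q(\mathbf{a}))$. This is a bijection of finite sets, so the cardinalities agree.

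Next, I would partition the right-hand side according to the insertion tableau. For each fixed $P\in\SDT(w)$, the fiber consists of all pairs $(P,Q)$ with $Q$ ranging over standard Young tableaux of the common shape $\sh(P)$, which has cardinality $f^{\sh(P)}$ by definition. Summing over $P$ gives
\[
|\Red(w)| \;=\; \sum_{P\in\SDT(w)} f^{\sh(P)},
\]
as desired.

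There is no genuine obstacle here: the statement is a bookkeeping consequence of Theorem~\ref{thm:insertion}, with the only subtlety being the observation that the shape of the recording tableau $Q(\mathbf{a})$ equals the shape of the insertion tableau $P(\mathbf{a})$, which is built into the recursive definition of the insertion since $Q^{(i)}$ is obtained from $Q^{(i-1)}$ by adding $i$ in the unique box of $P^{(i)}/P^{(i-1)}$.
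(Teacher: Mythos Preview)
Your proposal is correct and matches the paper's approach: the paper states this result as an immediate corollary of Theorem~\ref{thm:insertion} without further proof, and your argument spells out precisely the fiber-counting that makes it immediate.
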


\subsection{Type C Stanley symmetric functions and vexillary elements}
For $w\in W(B_n)$ and any strict partition $\lambda$, let $F^{C}(w)$ and $Q_{\lambda}$ be the corresponding \emph{type C Stanley symmetric function} and
\emph{Schur-Q function} respectively. See \cite{BLvexillary} for the exact definitions. 
\begin{theorem}[Theorem~3.12, \cite{Lam96}]
    $F^{C}(w) = \sum_{T\in \SDT(w)}Q_{\sh(T)}$.
\end{theorem}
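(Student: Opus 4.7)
My plan is to establish this Schur-$Q$ expansion by combining Kra\'skiewicz's insertion (Theorem~\ref{thm:insertion}) with a combinatorial definition of $F^C(w)$ as a weight generating function over reduced words equipped with compatible sequences, and then matching the result with the standard combinatorial description of $Q_\lambda$ in terms of marked shifted tableaux.

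First, I would recall (following Billey-Haiman or Fomin-Kirillov) the expansion
\[
F^C(w) \;=\; \sum_{\mathbf{a} \in \Red(w)} \sum_{\mathbf{b}} 2^{o(\mathbf{a},\mathbf{b})}\, x^{\mathbf{b}},
\]
where $\mathbf{b} = (b_1 \leq \cdots \leq b_{\ell(w)})$ ranges over $\mathbf{a}$-compatible sequences (strict ascents forced at descents of $\mathbf{a}$, together with the usual convention at letters $a_i = 0$), and $o(\mathbf{a},\mathbf{b})$ is the statistic producing the factors of $2$ coming from the extra simple reflection $s_0$. Similarly, I would use the description of $Q_\lambda$ as the generating function over marked (primed) standard shifted tableaux $\tilde T$ of shape $\lambda$, weighted by $x^{\mathbf{b}(\tilde T)}$ with a factor of $2$ for each ``free'' entry.

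Second, I would apply Theorem~\ref{thm:insertion} to reorganize the outer sum: each $\mathbf{a} \in \Red(w)$ corresponds bijectively to a pair $(P(\mathbf{a}), Q(\mathbf{a}))$ with $P(\mathbf{a}) \in \SDT(w)$ and $Q(\mathbf{a})$ a standard shifted tableau of shape $\sh(P(\mathbf{a}))$. Interchanging sums, it suffices to prove the refined identity
\[
\sum_{\mathbf{a}\,:\,P(\mathbf{a})\,=\,P}\;\sum_{\mathbf{b}} 2^{o(\mathbf{a},\mathbf{b})}\, x^{\mathbf{b}} \;=\; Q_{\sh(P)}
\]
for each fixed $P \in \SDT(w)$.

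Third, the core bijective step: given a pair $(\mathbf{a},\mathbf{b})$ with $P(\mathbf{a}) = P$, I would build a marked shifted tableau $\tilde T$ on $\sh(P)$ by placing $b_i$ into the box of $Q(\mathbf{a})$ labeled $i$, marked or unmarked according to whether the $i$-th insertion step landed in the first column or resulted from a case in the algorithm (notably Cases~2.0, 2.1.2, and 2.1.4) that contributes to $o(\mathbf{a},\mathbf{b})$. The claim is that this map is a bijection between such pairs $(\mathbf{a},\mathbf{b})$ and marked shifted tableaux of shape $\sh(P)$ with weight monomial $x^{\mathbf{b}}$, producing exactly $Q_{\sh(P)}$ when summed.

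The main obstacle is verifying this third step, namely that compatibility of $\mathbf{b}$ with $\mathbf{a}$ translates precisely into the marking/ordering rules for shifted tableaux under Kra\'skiewicz bumping. This requires a delicate case analysis through Step~2 of the insertion: one must check that each descent of $\mathbf{a}$ forces a strict increase in $\mathbf{b}$ at the right place in $\tilde T$, that the zero-letter cases generate exactly the right powers of $2$, and that standardizing a marked shifted tableau recovers $Q(\mathbf{a})$ uniquely. Once this compatibility is established, summing over $P \in \SDT(w)$ and invoking the combinatorial formula for $Q_{\sh(P)}$ yields the desired identity.
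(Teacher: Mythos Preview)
The paper does not prove this theorem at all: it is quoted verbatim as Theorem~3.12 of \cite{Lam96} and used as a black box. There is therefore no ``paper's own proof'' to compare your proposal against; the authors only need the corollary that $w^{(d,r)}$ is vexillary, and for that they appeal to the Billey--Lam pattern-avoidance criterion rather than to any argument about $F^C$.

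As for your sketch on its own merits: the overall architecture---partition $\Red(w)$ by insertion tableau via Theorem~\ref{thm:insertion}, then identify the inner sum over compatible sequences with $Q_{\sh(P)}$---is indeed the route taken in the literature. However, your third step is where the real content lies, and your description of it is not correct as stated. The marking on the recording tableau is not governed by which of Cases~2.0/2.1.2/2.1.4 fires; rather, one needs a \emph{semistandard} version of Kra\'skiewicz insertion (or equivalently an analysis of when consecutive insertions bump into the same row versus a new row), and the primes on the $Q$-tableau record whether the new box lands on the diagonal or not, in analogy with Sagan--Worley mixed insertion. Your formulation of the compatibility condition on $\mathbf{b}$ and of the statistic $o(\mathbf{a},\mathbf{b})$ is also too vague to pin down the correct power of $2$: in type $C$ the relevant structure is that of \emph{peak} compatible sequences, not the ordinary descent-compatible sequences from type~$A$, and the factor of $2$ is $2^{|\mathbf{b}|}$ (the number of distinct values), not something read off from the zero letters. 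Without these corrections the bijection in your third step does not go through.
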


\begin{defin}
    A permutation $w\in W(B_n)$ is said to be \emph{vexillary} if $\SDT(w)$ consists of exactly one shifted tableau. We denote this tableau as $P(w)$.
\end{defin}

\begin{defin}
    For any $w\in W(B_n)$ and any $v\in W(B_m)$ such that $m\leq n$, we say $w$ \emph{pattern embeds} $v$ if the following is true for some $1\leq i_1<i_2<\ldots<i_m\leq n$:
    \begin{enumerate}
        \item $w(i_j)$ has the same sign as $v(j)$,
        \item For all $j, k$, $|w(i_j)|<|w(i_k)|$ if and only if $|v(j)|<|v(k)|$.
    \end{enumerate}
We say $w$ \emph{pattern avoids} $v$ if $w$ does not pattern embed $v$.
\end{defin}

\begin{theorem}[Theorem~7, \cite{BLvexillary}]
    A permutation $w\in W(B_n)$ is vexillary if and only if $w$ pattern avoids the following permutations:
    \[\begin{matrix}
        \bar{3}2\bar{1} & \bar{3}21 & 32\bar{1} & 321 & 3\bar{1}2 &
        \bar{2}31 & \bar{1}32 & \bar{4}\bar{1}\bar{2}3 & \bar{4}1\bar{2}3 \\ \bar{3}\bar{4}\bar{1}\bar{2} &
        \bar{3}\bar{3}1\bar{2} & 3\bar{4}\bar{1}\bar{2} & 3\bar{4}1\bar{2} & 3142 & \bar{2}\bar{3}4\bar{1} &
        2413 & 2\bar{3}4\bar{1} & 2143 
    \end{matrix}\,.\]
\end{theorem}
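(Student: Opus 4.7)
The plan is to prove the equivalence in two directions, leveraging the preceding theorem that $F^C(w) = \sum_{T\in \SDT(w)} Q_{\sh(T)}$. Since the $Q_\lambda$ are linearly independent, $w$ is vexillary if and only if $F^C(w)$ is a single Schur-$Q$ function. Both directions can then be phrased combinatorially in terms of $\SDT(w)$.

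For the forward direction (vexillary implies pattern avoidance), I would establish a pattern-inheritance lemma: if $v\in W(B_m)$ pattern-embeds into $w\in W(B_n)$ and $|\SDT(v)|\geq 2$, then $|\SDT(w)|\geq 2$. The intuition is that any pattern embedding lifts a reduced word of $v$ to a subword of some reduced word of $w$, and Kra\'skiewicz insertion is sufficiently local that two distinct $P$-tableaux of $v$ produce two distinct $P$-tableaux of $w$ after inserting the ``framing'' letters. A clean way to formalize this is to induct on $n-m$, reducing to the case where $w$ differs from $v$ by the insertion of a single letter in the one-line notation (and, dually, the prepending/appending of segments of simple reflections to a reduced word). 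Granting the lemma, one is reduced to the finite check that each of the $18$ listed patterns $v$ satisfies $|\SDT(v)|\geq 2$, done by direct enumeration via Kra\'skiewicz insertion on every reduced word.

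For the backward direction (pattern avoidance implies vexillary), the plan is to produce an explicit candidate tableau $P(w)$ from the signed code (or shifted diagram) of $w$ and show (i) that $\sh(T) = \sh(P(w))$ for every $T\in \SDT(w)$, and (ii) that $T = P(w)$. A natural source for $\sh(P(w))$ is the strict partition obtained by rearranging the type $B$ code of $w$ in decreasing order; pattern avoidance should ensure that the code is already ``shifted-like'' (no repeats, no bad inversions), in analogy with the type $A$ fact that $2143$-avoidance makes the Lehmer code a partition. Step (i) can be approached via Schur-$Q$-positivity of $F^C(w)$ together with induction on $\ell(w)$: remove the last descent (equivalently, right-multiply by a simple reflection) and apply a transition/Monk-type recursion for $F^C$, verifying that avoidance is preserved at each vexillary-shape-preserving step while any shape-changing step would force the creation of one of the $18$ patterns. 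Step (ii) then follows by running Kra\'skiewicz insertion on a canonical reduced word read off from the code (say the lexicographically minimal one) and verifying uniqueness inductively.

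The hard part is the backward direction, specifically establishing step (i). The forward direction and the finite base cases are mechanical, but showing that the $18$ patterns are \emph{sufficient} to force a single shape requires a global argument: one must rule out any combinatorial mechanism that could split $F^C(w)$ into two Schur-$Q$ summands without creating one of the listed patterns. The most delicate subcase will be the interaction between sign changes (the $\alpha_0$ reflection) and classical inversions, which accounts for why the pattern list contains both short signed patterns like $\bar 32\bar 1$ and longer mixed-sign patterns like $\bar 4\bar 1\bar 23$ and $3\bar 41\bar 2$. Managing these via the transition recursion, and showing closure of the shape $\lambda(w)$ under the allowed moves, is where the bulk of the work will lie.
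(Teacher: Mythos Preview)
The paper does not prove this theorem. It is quoted verbatim as Theorem~7 of \cite{BLvexillary} (Billey--Lam) and used as a black box: immediately after stating it, the authors simply check that $w^{(d,r)}$ avoids the listed patterns and conclude that $w^{(d,r)}$ is vexillary. There is therefore no ``paper's own proof'' to compare your proposal against.

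Given that, a brief comment on your outline as a stand-alone attempt. The forward direction via a pattern-inheritance lemma is the right idea, but your proposed mechanism (``Kra\'skiewicz insertion is sufficiently local that two distinct $P$-tableaux of $v$ produce two distinct $P$-tableaux of $w$'') is not how one makes this rigorous: insertion is not local in that sense, and lifting reduced words through a pattern embedding does not interact cleanly with the $P$-tableau. The standard route is instead to show that a pattern embedding $v\hookrightarrow w$ forces $F^C(v)$ to appear (up to a shift) inside $F^C(w)$ at the level of Schur-$Q$ expansions, or equivalently to use Little/Lam-type bumping bijections to transport multiple $\SDT$ shapes. For the backward direction you correctly identify the hard part, but ``remove the last descent and apply a transition recursion while checking avoidance is preserved'' is exactly where the Billey--Lam argument does substantial case analysis; your sketch does not yet contain the key structural observation (their classification of the shape $\lambda(w)$ via a signed analogue of essential sets/flagged conditions) that makes the induction close. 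As written, the backward direction is a plan rather than a proof.
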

By comparing \eqref{eqn:defw} with the above list of patterns, we directly see that $w^{(d,r)}$ is vexillary. In particular, this means that any $\mathbf{a}\in \Red(w^{(d,r)})$ has the same $P$ tableau.

\subsection{Proof of Theorem~\ref{thm:main} in the case \texorpdfstring{$\lambda = Z(d,r)$}{}}

\begin{prop}\label{prop:shape-of-P(w)}
For $d>0$ and $r\geq0$, $\sh(P(w^{(d,r)})) = Z(d,r)$. Moreover, \begin{equation*}
        \begin{split}
            P(w^{(d,r)})(i,j) = 
            \begin{cases}
            r-j &\text{ if }j<0,\\
            j &\text{ if }j\geq 0.
            \end{cases}
        \end{split}
    \end{equation*}
\end{prop}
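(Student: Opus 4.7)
Plan: Since $w^{(d,r)}$ satisfies the vexillary criterion cited in the previous subsection (it is direct to check that the one-line notation $(d{+}1,\ldots,d{+}r,\bar 1,\ldots,\bar d)$ avoids each of the nineteen listed patterns), Theorem~\ref{thm:insertion} implies $\SDT(w^{(d,r)})$ has a unique element, namely $P(w^{(d,r)})$. Hence it suffices to exhibit \emph{one} standard decomposition tableau of $w^{(d,r)}$ matching the claim. Let $T$ denote the filling of $Z(d,r)$ defined by $T(i,j)=r{-}j$ for $j<0$ and $T(i,j)=j$ for $j\geq 0$; row $i$ then reads the palindromic sequence
\[
r{+}d{-}i,\,\, r{+}d{-}i{-}1,\,\,\ldots,\,\, r{+}1,\,\, 0,\,\, 1,\,\,\ldots,\,\, r{+}d{-}i
\]
from left to right.

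Unimodality of each row is immediate from the form above (strictly decreasing to $0$, then strictly increasing). For the claim that $\pi(T)=T_dT_{d-1}\cdots T_1$ is a reduced word of $w^{(d,r)}$, I plan to induct on $k$, establishing that after performing the simple reflections drawn from the bottom $k$ rows, the running one-line notation is
\[
k{+}1,\,\, k{+}2,\,\,\ldots,\,\, r{+}k,\,\, \bar 1,\,\, \bar 2,\,\,\ldots,\,\, \bar k,\,\, r{+}k{+}1,\,\,\ldots,\,\, d{+}r.
\]
The inductive step traces the effect of row $T_{d-k}$: its decreasing segment $s_{r+k}\cdots s_{r+1}$ pushes $\bar k$ one position to the right while sliding $r{+}k{+}1$ leftward, the middle letter $s_0$ negates the leftmost entry to create $\overline{k{+}1}$, and the increasing segment $s_1 s_2\cdots s_{r+k}$ transports $\overline{k{+}1}$ into its target slot. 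Taking $k=d$ recovers $w^{(d,r)}$ as given in~\eqref{eqn:defw}, and since $|\pi(T)|=|Z(d,r)|=d^2+dr=\ell(w^{(d,r)})$, $\pi(T)$ is reduced.

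The remaining SDT condition---that $T_i$ is a unimodal subsequence of $T_dT_{d-1}\cdots T_i$ of \emph{maximal} length---is the most delicate step. My plan is to sidestep the direct verification and instead run Kra\'skiewicz's insertion on $\pi(T)$, proving by induction on $k$ that inserting the first $k$ factors $T_d, T_{d-1}, \ldots, T_{d-k+1}$ returns precisely the subtableau of $T$ on rows $d{-}k{+}1, \ldots, d$. The inductive step inserts $T_{d-k}$ letter by letter: leveraging the palindromic shape of each row about the column-$0$ entry, one can predict which sub-case of the algorithm triggers at each letter and track the bumping cascade down through all existing rows, showing that each bumped value lands in the predicted cell. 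Once this insertion computation establishes $P(\pi(T))=T$, Theorem~\ref{thm:insertion} yields $T\in\SDT(w^{(d,r)})$, and vexillarity forces $T=P(w^{(d,r)})$. The main obstacle is the case analysis for the inductive insertion step: the bumping rules split into Cases~2.1.1--2.1.4 (with the additional Case~2.0 when $a=0$), and the argument hinges on verifying at each letter that the correct sub-case triggers so that the cascade propagates coherently.
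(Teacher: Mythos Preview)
Your proposal is correct, and the first half (vexillarity, the explicit filling $T$, unimodality, and the inductive check that $\pi(T)\in\Red(w^{(d,r)})$ via the intermediate permutations) matches the paper exactly.

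The divergence is in the ``maximal length'' verification, which you call the most delicate step and propose to handle by running Kra\'skiewicz insertion on $\pi(T)$ and tracking the bumping cascade. This would work, but it is far more labor than needed. The paper simply asserts the maximality is clear, and here is why: every entry of $T_dT_{d-1}\cdots T_i$ lies in $\{0,1,\ldots,r{+}d{-}i\}$, since row $T_k$ uses only the values $0,\ldots,r{+}d{-}k$ and $k\geq i$. A strictly-decreasing-then-strictly-increasing sequence drawn from $\{0,\ldots,M\}$ has length at most $2M{+}1$; with $M=r{+}d{-}i$ this bound is exactly $|T_i|=2(r{+}d{-}i)+1$, so $T_i$ is already maximal. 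This one-line observation replaces your entire inductive insertion computation with its multi-case analysis, and directly yields $T\in\SDT(w^{(d,r)})$.

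Your route buys nothing extra: you end up at the same conclusion $T=P(w^{(d,r)})$ via the same vexillarity argument, just after a substantially longer detour. I would recommend replacing the insertion-based plan with the elementary bound above.
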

See Figure~\ref{fig:trapezoidT} for this tableaux. A concrete example is also shown in Example~\ref{ex:w-lam}.
\begin{proof}
Consider the shifted tableau $T$ defined as above in the proposition statement. We claim that $T = P(w^{(d,r)})$. Since $w^{(d,r)}$ is vexillary, it is enough to show that $T\in \SDT(w)$.

For every $i\in [d]$, it is clear from definition that
\[T_i = r+d-i,\ldots,r+1,0,1,\ldots,r+d-i\]
is a unimodal subsequence of maximal length in $T_dT_{d-1}\ldots T_1$. For $i\in [d]$, define $u^{(i)}\in W(B_n)$ by the one-line notation:
\[u^{(i)} = (i+1,i+2,\ldots,r+i,\bar{1},\ldots,\bar{i},r+i+1,\ldots,r+d).\]
It is straightforward to check that, for all $i\in[d]$,
\[T_dT_{d-1}\ldots T_{d-i+1}\in \Red(u^{(i)}).\]
Notice that $w^{(d,r)} = u^{(d)}$. We conclude that $\pi(T)\in \Red(w^{(d,r)})$ and thus $T\in \SDT(w)$.
\end{proof}
\begin{figure}[h!]
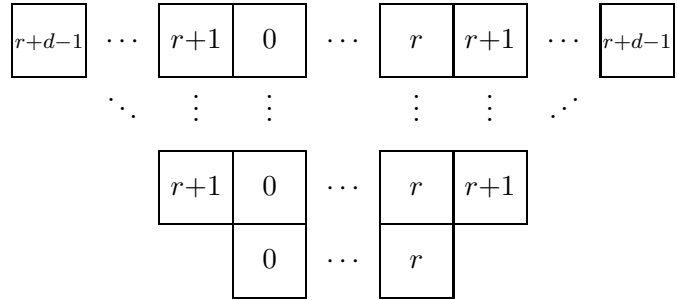

\centering
\ytableausetup{boxsize=2.5em}
\begin{ytableau}
\scriptstyle{r{+}d{-}1} & \none[\cdots] & r{+}1 & 0 & \none[\cdots] & r & r{+}1& \none[\cdots] & \scriptstyle{r{+}d{-}1}\\
\none & \none[\ddots] & \none[\vdots] & \none[\vdots] & \none & \none[\vdots] &\none[\vdots] & \none[\iddots]\\
\none & \none & r{+}1 & 0 & \none[\cdots] & r & r{+}1\\
\none & \none & \none & 0 & \none[\cdots] & r
\end{ytableau}
\caption{The insertion tableaux $P(w^{(d,r)})$ of shape $Z(d,r)$}
\label{fig:trapezoidT}
\end{figure}

By Theorem~\ref{thm:insertion} and Proposition~\ref{prop:shape-of-P(w)}, we obtain:
\begin{cor}\label{cor:uniquePtableau}
By restricting to the recording tableaux, Kra\'skiewicz's insertion gives a bijection between $\Red(w^{(d,r)})$ and $\SYT(Z(d,r))$.
\end{cor}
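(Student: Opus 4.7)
The plan is to combine Theorem~\ref{thm:insertion} with Proposition~\ref{prop:shape-of-P(w)} and the vexillarity of $w^{(d,r)}$. Theorem~\ref{thm:insertion} already supplies a bijection $\mathbf{a}\mapsto(P(\mathbf{a}),Q(\mathbf{a}))$ from $\Red(w^{(d,r)})$ onto pairs where $P(\mathbf{a})\in\SDT(w^{(d,r)})$ and $Q(\mathbf{a})$ is standard of the same shape; all the hard work has been done there. The goal is therefore just to argue that the $P$-coordinate carries no information in our case, so that the map $\mathbf{a}\mapsto Q(\mathbf{a})$ alone is already a bijection onto $\SYT(Z(d,r))$.

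First I would record that $w^{(d,r)}$ is vexillary. This was observed in the discussion after the Billey--Lam pattern list: inspecting \eqref{eqn:defw}, the one-line notation of $w^{(d,r)}$ is $(d+1)(d+2)\cdots(d+r)\bar 1\bar 2\cdots\bar d$, and one can check directly that none of the forbidden patterns appears. By Definition of vexillary, $\SDT(w^{(d,r)})$ then consists of a single tableau, which Proposition~\ref{prop:shape-of-P(w)} identifies with the explicit tableau $P(w^{(d,r)})$ of shape $Z(d,r)$.

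Next I would put the pieces together. Because $\SDT(w^{(d,r)})=\{P(w^{(d,r)})\}$, every $\mathbf{a}\in\Red(w^{(d,r)})$ gives $P(\mathbf{a})=P(w^{(d,r)})$, and so $Q(\mathbf{a})\in\SYT(Z(d,r))$ by Theorem~\ref{thm:insertion}. Injectivity of $\mathbf{a}\mapsto Q(\mathbf{a})$ is inherited from Theorem~\ref{thm:insertion}, since the $P$-tableau is constant. For surjectivity, given any $Q\in\SYT(Z(d,r))$ the pair $(P(w^{(d,r)}),Q)$ is a valid $(P,Q)$-pair for Theorem~\ref{thm:insertion}, hence lies in the image and has a unique preimage $\mathbf{a}\in\Red(w^{(d,r)})$ with $Q(\mathbf{a})=Q$.

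I do not expect a real obstacle here: the statement is a formal corollary once vexillarity of $w^{(d,r)}$ has been checked and once Proposition~\ref{prop:shape-of-P(w)} pins down the shape as $Z(d,r)$. The only thing to be a little careful about is explicitly invoking vexillarity so that $\SDT(w^{(d,r)})$ is truly a singleton; this is what collapses the pair $(P(\mathbf{a}),Q(\mathbf{a}))$ to just $Q(\mathbf{a})$ and lets the bijection be restated in terms of recording tableaux.
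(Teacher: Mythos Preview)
Your argument is correct and matches the paper's approach exactly: the corollary is obtained immediately from Theorem~\ref{thm:insertion} and Proposition~\ref{prop:shape-of-P(w)}, together with the vexillarity of $w^{(d,r)}$ (established just before Proposition~\ref{prop:shape-of-P(w)}) which forces the $P$-tableau to be unique.
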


Combining Corollary~\ref{cor:reduced-word-balanced} and Corollary~\ref{cor:uniquePtableau}, we derive Theorem~\ref{thm:main} for the trapezoid shape $Z(d,r)$.
\section{The general case}\label{sec:general}
In this Section, we prove our main result, Theorem~\ref{thm:main}, in full generality. Fix a strict partition $\lambda\subset Z(d,r)$ such that $\lambda_d>0$ and set $N = |\lambda|$. Let $\ell = |Z(d,r)| = \ell(w^{(d,r)})$ and set $\mu_0 = \sigma_0 = 0$, $\mu_i = Z(d,r)_i-\lambda_i$ and $\sigma_i = \sum_{k=1}^{i}\mu_k$ for all $i\in [d]$. Recall our main framework \[\SYT(\lambda)\longleftrightarrow\SYT(Z(d,r))|_{\lambda}\longleftrightarrow\Red(w^{\lambda})\longleftrightarrow\BS(Z(d,r))|_{\lambda}\longleftrightarrow\BS(\lambda).\] The first arrow of bijection is immediate (Definition~\ref{def:SYT|lambda}). In each of the subsequent subsections, we prove one remaining bijection respectively. We will make heavy use of Section~\ref{sec:kraskiewicz}, and consider $\Red(w^{\lambda})$ as the subset of $\Red(w^{(d,r)})$ that ends with a fixed sequence.

A complete example is given at the end of this section in Example~\ref{ex:main-ex}. Readers are encouraged to refer to this example for intuition. 
\subsection{Bijection between \texorpdfstring{$\SYT(Z(d,r))|_{\lambda}$}{} and \texorpdfstring{$\Red(w^{\lambda})$}{}}
\begin{defin}\label{def:SYT|lambda}
    For any tableau $T\in \SYT(\lambda)$, define $T^{+}\in \SYT(Z(d,r))$ to be the tableau obtained from $T$ by assigning $N+1,\ldots,\ell$ to the cells in $Z(d,r)\setminus \lambda$ from left to right along rows and from top to bottom. Define $\SYT(Z(d,r))|_{\lambda}$ to be the set of all such $T^+$ obtained from some $T\in \SYT(\lambda)$.
\end{defin}
\begin{ex}
Let $\lambda = (6,2,1)\subset Z(3,2)$ and 
    \[\ytableausetup{boxsize=1.5em}
T = \begin{ytableau}
1 & 2 & 3 & 5 & 6 & 9 \\
\none & 4 & 7 & \none & \none \\
\none & \none & 8 & \none 
\end{ytableau}, \text{ then }
T^{+} = \begin{ytableau}
1 & 2 & 3 & 5 & 6 & 9 & \circled{10}\\
\none & 4 & 7 & \circled{11} & \circled{12} & \circled{13}\\
\none & \none & 8 & \circled{14} & \circled{15}
\end{ytableau}\, .
\]
\end{ex}
\begin{lemma}[Lemma~1.25, \cite{Lam1995thesis}]\label{lemma:reverse-insertion}
Given $(P(\mathbf{a}),Q(\mathbf{a}))$ for some $\mathbf{a} = a_1a_2\cdots a_{\ell(w)}\in \Red(w)$ and $w\in W(B_n)$, let $Q'$ be obtained by removing the largest entry in $Q(\mathbf{a})$. Then there is a unique $a\in [0,n-1]$ and a unique $P'\in \SDT(ws_{a})$ such that $P' \leftarrow a = P,$ and $\sh(P') = \sh(Q')$. In fact, we have $a = a_{\ell(w)}$.
\end{lemma}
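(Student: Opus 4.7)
The plan is to prove this by constructing an explicit \emph{reverse insertion} algorithm that is manifestly the step-by-step inverse of Kra\'skiewicz's insertion, and then verify that its output satisfies all three required properties. First I would locate the cell $c$ in $P(\mathbf{a})$ whose entry was placed during the final insertion step: since $Q(\mathbf{a})$ is standard and records the order in which cells were added, the position of the largest entry of $Q(\mathbf{a})$ is precisely the cell of $P(\mathbf{a})/P^{(\ell(w)-1)}$, i.e., the cell added when inserting $a_{\ell(w)}$. Removing this cell (and keeping the recorded entries unchanged) already gives a diagram of shape $\sh(Q')$; the task is to reconstruct the entries of $P'$ and to recover the scalar $a$.

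Next I would unwind the bumping row by row, starting from the row $i_0$ containing $c$ and moving upward. At row $i_0$, the insertion had to terminate in Case 0 or Case 1; the entry $b$ in $c$ is therefore the value ``just inserted into row $i_0$''. If $i_0 = 1$, then $a = b$ and we stop. Otherwise, $b$ must be the value bumped out of row $i_0-1$ during the forward algorithm, and we invert the forward step: locate in row $i_0-1$ the unique pair of entries (one in the decreasing part, one in the increasing part) whose modification would produce the outgoing value $b$, inverting Cases 2.1.1--2.1.4 according to whether equalities hold; when $b = 0$ and the row above contains a $101$ pattern, invert Case 2.0 by passing through that row unchanged. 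Each of these sub-cases has a deterministic inverse because the forward cases partition on strict inequalities and on whether $b$ equals $0$ or equals the candidate entry, and these conditions can be detected from the state of the row we are reversing into. This procedure terminates in row $1$ with a well-defined letter $a \in [0,n-1]$ and a tableau $P'$ of shape $\sh(Q')$, and by construction $P' \leftarrow a = P(\mathbf{a})$.

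Finally I would verify that $P' \in \SDT(ws_a)$. The reading word satisfies $\pi(P') \cdot a = \pi(P(\mathbf{a}))$ after the insertion, but since Kra\'skiewicz's insertion preserves the Coxeter-Knuth class, $\pi(P') \cdot a$ is a reduced word of $w$, hence $\pi(P')$ is a reduced word of $ws_a$. The unimodal-maximality of each row of $P'$ follows because reversing the bumping preserves the unimodal structure of each row (this is the standard invariant in the forward direction, used to justify Cases 2.1.1--2.1.4, and it is symmetric under inversion). Uniqueness of $(a, P')$ is automatic from the determinism of the reverse procedure. The identification $a = a_{\ell(w)}$ follows by applying reverse insertion to the known output $(P(\mathbf{a}), Q(\mathbf{a}))$: since the forward algorithm applied to $\mathbf{a}$ places cell $c$ at step $\ell(w)$ and the reverse reconstructs exactly the input letter of that step.

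The main obstacle I expect is bookkeeping the Case 2.0 reversal cleanly, because the forward rule leaves a row literally untouched when inserting $0$ into a row containing $101$; in reverse we must detect when the unbumped value is $0$ and the previous row exhibits this pattern, and resist the temptation to modify that row. Handling this case, together with checking that the increasing/decreasing split of each row is preserved under the reverse bump (in particular at the boundary value where both parts meet), is where the verification is most delicate, but it is precisely the content of Lam's Lemma~1.25 that these subtleties work out.
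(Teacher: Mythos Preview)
The paper does not supply its own proof of this lemma; it is quoted verbatim as Lemma~1.25 of Lam's thesis \cite{Lam1995thesis} and used as a black box. Your proposal to construct an explicit reverse insertion, unwinding the bump from the cell of the largest recording entry upward through rows and inverting each of Cases~2.0 and 2.1.1--2.1.4, is exactly the standard argument (and is what one finds in the cited source), so there is nothing to compare against in the present paper. One small wording issue: the sentence ``$\pi(P')\cdot a = \pi(P(\mathbf{a}))$'' is not literally true as an equality of words; what you mean (and immediately say next) is that the concatenation $\pi(P')\,a$ is Coxeter--Knuth equivalent to $\pi(P(\mathbf{a}))$, hence a reduced word for $w$, which is the correct statement.
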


\begin{prop}\label{prop:unique-inverse}
There exists a unique $w^{\lambda}\in W(B_n)$ and a unique word 
\[\mathbf{a^\lambda} = a_{N+1}\cdots a_{\ell}\in \Red((w^{\lambda})^{-1}w^{(d,r)})\]
such that for every $T^+\in \SYT(Z(d,r))|_{\lambda}$, there is a unique $\mathbf{a'}\in \Red(w^{\lambda})$ so that $T^+ = Q(\mathbf{a'}\mathbf{a^\lambda})$.
\end{prop}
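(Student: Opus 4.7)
Proof plan: The plan is to use Corollary~\ref{cor:uniquePtableau} to convert each $T^+\in \SYT(Z(d,r))|_\lambda$ into the unique reduced word $\mathbf{a}=a_1\cdots a_\ell\in\Red(w^{(d,r)})$ with $Q(\mathbf{a})=T^+$ and $P(\mathbf{a})=P(w^{(d,r)})$, and then show by iterative reverse insertion that the last $\ell-N$ letters of $\mathbf{a}$ are forced to be the same for every $T^+$. The crucial observation from Definition~\ref{def:SYT|lambda} is that the cells in $T^+$ containing each of the entries $N+1,\ldots,\ell$ depend only on $\lambda$, not on the restriction $T=T^+|_\lambda$.

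Next I would induct downward on $k=\ell,\ell-1,\ldots,N+1$, maintaining the pair $(P^{(k)},Q^{(k)})$ with $P^{(k)}=P(a_1\cdots a_k)$ and $Q^{(k)}=T^+|_{\{1,\ldots,k\}}$, and showing that $P^{(k)}$ and $a_k$ depend only on $\lambda$. The base case $P^{(\ell)}=P(w^{(d,r)})$ is fixed by Proposition~\ref{prop:shape-of-P(w)}. For the inductive step, applying Lemma~\ref{lemma:reverse-insertion} to $(P^{(k)},Q^{(k)})$ removes the largest entry $k$, whose cell in $Q^{(k)}$ is determined by $\lambda$ alone; hence $\sh(Q^{(k-1)})$ depends only on $\lambda$, and the lemma uniquely produces $a_k$ and $P^{(k-1)}$ from $(P^{(k)},\sh(Q^{(k-1)}))$. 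After $\ell-N$ iterations, the suffix $\mathbf{a^\lambda}:=a_{N+1}\cdots a_\ell$ and the element $w^\lambda:=w^{(d,r)}s_{a_\ell}\cdots s_{a_{N+1}}$ depend only on $\lambda$. Since $\mathbf{a}$ is reduced, so are its prefix $\mathbf{a'}:=a_1\cdots a_N\in\Red(w^\lambda)$ and its suffix $\mathbf{a^\lambda}\in\Red((w^\lambda)^{-1}w^{(d,r)})$.

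The existence half of the proposition is now in hand. For uniqueness of $(w^\lambda,\mathbf{a^\lambda})$: any pair satisfying the stated property must agree with the common suffix read off by the reverse insertion from an arbitrary $T^+$. For uniqueness of $\mathbf{a'}$ for a given $T^+$: if $\mathbf{a''}\mathbf{a^\lambda}$ also gave $T^+$ with $\mathbf{a''}\in\Red(w^\lambda)$, then bijectivity in Corollary~\ref{cor:uniquePtableau} forces $\mathbf{a''}\mathbf{a^\lambda}=\mathbf{a'}\mathbf{a^\lambda}$, whence $\mathbf{a''}=\mathbf{a'}$. The principal difficulty is the inductive claim that $P^{(k)}$ depends only on $\lambda$; this is precisely what Lemma~\ref{lemma:reverse-insertion} delivers, since it pins down $(a_k,P^{(k-1)})$ from $(P^{(k)},\sh(Q^{(k-1)}))$, both of which are independent of $T^+$ by the rigid construction of $T^+$ from $\lambda$.
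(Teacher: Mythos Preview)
Your proposal is correct and follows essentially the same approach as the paper: both use Corollary~\ref{cor:uniquePtableau} to pass to a reduced word and then iterate Lemma~\ref{lemma:reverse-insertion} downward from $k=\ell$ to $k=N+1$, using that the cells carrying $N{+}1,\ldots,\ell$ in any $T^+$ are fixed by $\lambda$, to conclude that the suffix $a_{N+1}\cdots a_\ell$ and the intermediate $P$-tableaux are independent of $T^+$. The only cosmetic difference is that the paper phrases the induction by comparing two arbitrary $T^+,S^+\in\SYT(Z(d,r))|_\lambda$ and showing their suffixes agree, whereas you argue directly that each $a_k$ and $P^{(k-1)}$ depends only on $\lambda$; these are equivalent.
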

\begin{proof}
We show that under the bijection of Corollary~\ref{cor:uniquePtableau}, every $T^+\in \SYT(Z(d,r))|_{\lambda}$ corresponds to a reduced word with a fixed ending sequence of length $\ell-N$, establishing the uniqueness of $\mathbf{a}^{\lambda}$, and consequently of $w^{\lambda}$. 
Let $T^+,S^+\in \SYT(Z(d,r))|_{\lambda}$. By Corollary~\ref{cor:uniquePtableau}, there is a unique $\mathbf{a} = a_1 a_2 \cdots a_{\ell}\in \Red(w^{(d,r)})$ and a unique $\mathbf{b} = b_1b_2\cdots b_{\ell}\in \Red(w^{(d,r)})$ such that $Q(\mathbf{a}) = T^+$ and $Q(\mathbf{b}) = S^+$.
Let $Q'(\mathbf{a})$ and $Q'(\mathbf{b})$ be obtained from $Q(\mathbf{a})$ and $Q(\mathbf{b})$ by removing the largest entry respectively.
Since $i$ appears in the same box in $T^{+}$ and $S^+$ for all $i>N$, $\sh(Q'(\mathbf{a})) = \sh(Q'(\mathbf{b}))$.
Since $P(\mathbf{a}) = P(\mathbf{b})$, by the uniqueness of $a$ and $P'$ in Lemma~\ref{lemma:reverse-insertion}, we get $a_{\ell}  = b_{\ell}$ and $P(a_1\cdots a_{\ell-1}) = P(b_1\cdots b_{\ell-1}).$

Note that the above argument holds as long as the two insertion tableaux are the same and the largest entry in the two recording tableaux appear in the same box. In particular, we can apply the argument $\ell-N$ times and get
\begin{equation}\label{eqn:Mar19aaa}
    b_i = a_i\ \text{for all }i>N \text{ and }P(a_1\cdots a_N) = P(b_1\cdots b_N).
\end{equation}
Set
\begin{equation}\label{eqn:w^lambda}
    \mathbf{a^\lambda} = a_{N+1}\cdots a_{\ell}\ \text{and }w^{\lambda} = w^{(d,r)}s_{a_{\ell}}\cdots s_{a_{N+1}}.
\end{equation}
Then for every $\mathbf{a}\in \Red(w^{(d,r)})$ such that $Q(\mathbf{a})\in \SYT(Z(d,r))|_{\lambda}$, 
$\mathbf{a} = \mathbf{a'}\mathbf{a^\lambda}$
for some $\mathbf{a'}\in \Red(w^{\lambda})$. The uniqueness of such $\mathbf{a'}$ follows from Corollary~\ref{cor:uniquePtableau}.
\end{proof}
\begin{cor}\label{cor:RedtoSYTlambda}
    Let $\mathbf{a^\lambda}$ and $w^{\lambda}$ be as defined in \eqref{eqn:w^lambda}. 
     Then there is a bijection between $\Red(w^{\lambda})$ and $\SYT(Z(d,r))|_{\lambda}$ given by $\mathbf{a'} \mapsto Q(\mathbf{a'}\mathbf{a^\lambda})$.
\end{cor}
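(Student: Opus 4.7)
The plan is to verify that the proposed map $\Psi: \mathbf{a'} \mapsto Q(\mathbf{a'}\mathbf{a^\lambda})$ satisfies the three properties of a bijection: the image lies in $\SYT(Z(d,r))|_\lambda$ (well-definedness), $\Psi$ is injective, and $\Psi$ is surjective. Surjectivity is precisely what Proposition~\ref{prop:unique-inverse} provides: every $T^+\in\SYT(Z(d,r))|_\lambda$ admits a (unique) $\mathbf{a'}\in\Red(w^\lambda)$ with $T^+ = Q(\mathbf{a'}\mathbf{a^\lambda})$. For injectivity, note that $\mathbf{a'}\mathbf{a^\lambda}$ has length $N+(\ell-N)=\ell$ and multiplies to $w^\lambda(w^\lambda)^{-1}w^{(d,r)}=w^{(d,r)}$, so $\mathbf{a'}\mathbf{a^\lambda}\in\Red(w^{(d,r)})$; since Corollary~\ref{cor:uniquePtableau} identifies $\Red(w^{(d,r)})$ bijectively with $\SYT(Z(d,r))$ via $\mathbf{a}\mapsto Q(\mathbf{a})$, distinct $\mathbf{a'}$ yield distinct $Q(\mathbf{a'}\mathbf{a^\lambda})$.

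The main content is well-definedness: we need to show that for \emph{every} $\mathbf{a'}\in\Red(w^\lambda)$, the cells containing $N+1,\ldots,\ell$ in $Q(\mathbf{a'}\mathbf{a^\lambda})$ constitute $Z(d,r)\setminus\lambda$ in left-to-right, top-to-bottom order. The strategy is to compare with a reference. Since $\SYT(\lambda)\neq\emptyset$, pick any $T_0^+\in\SYT(Z(d,r))|_\lambda$ with corresponding reduced word $\mathbf{a}_0=\mathbf{a'}_0\mathbf{a^\lambda}$, and let $P^{(i)}$, $P_0^{(i)}$ denote the Kra\'skiewicz insertion tableaux at step $i$ for $\mathbf{a'}\mathbf{a^\lambda}$ and $\mathbf{a}_0$ respectively. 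By Proposition~\ref{prop:shape-of-P(w)} both reach $P(w^{(d,r)})$ at step $\ell$, so $P^{(\ell)}=P_0^{(\ell)}$. I then argue by reverse induction on $i$ that $P^{(i)}=P_0^{(i)}$ for all $i\in[N,\ell]$: at each step $i>N$ the inserted letter is the common $(i{-}N)$-th entry of $\mathbf{a^\lambda}$, so together with the inductive hypothesis $P^{(i)}=P_0^{(i)}$ it uniquely determines $P^{(i-1)}$ via reverse Kra\'skiewicz insertion (Lemma~\ref{lemma:reverse-insertion}). In particular $\sh(P^{(N)})=\lambda$ and the shape sequence for $i\in[N,\ell]$ matches that of $\mathbf{a}_0$, so $Q(\mathbf{a'}\mathbf{a^\lambda})$ adds cells of $Z(d,r)\setminus\lambda$ in the required order.

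The main obstacle is the uniqueness claim in the reverse induction: Lemma~\ref{lemma:reverse-insertion} is stated with respect to a shape change (equivalently, the box of the largest entry of the recording tableau), not directly in terms of the inserted letter. To bridge this, I need to verify that fixing the inserted letter $a_i$ together with the post-insertion tableau $P^{(i)}$ pins down the shape change; equivalently, the correspondence \emph{shape-change} $\mapsto$ \emph{inserted letter} produced by Lemma~\ref{lemma:reverse-insertion} from a fixed $P^{(i)}$ is injective. This can be established by a case analysis of Step~2 of the Kra\'skiewicz insertion algorithm, tracing the reverse bumping path from the added cell upward to the first row, where the unique choice at each row is forced by the local unimodal structure. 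Once this reversibility property is in hand, the backward induction runs without further issues and the corollary follows.
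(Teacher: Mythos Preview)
Your skeleton matches the paper's: surjectivity comes from Proposition~\ref{prop:unique-inverse}, injectivity from the Kra\'skiewicz bijection of Corollary~\ref{cor:uniquePtableau}, and well-definedness from the claim that $P(\mathbf{a'})$ is independent of $\mathbf{a'}\in\Red(w^\lambda)$ and has shape $\lambda$ (so that forward-inserting $\mathbf{a^\lambda}$ into it traces a fixed shape sequence, forcing $Q(\mathbf{a'}\mathbf{a^\lambda})\in\SYT(Z(d,r))|_\lambda$). The paper argues this last point in one line, citing \eqref{eqn:Mar19aaa}.

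Where you differ is in being more careful about well-definedness, and the obstacle you flag is genuine. Equation~\eqref{eqn:Mar19aaa} was obtained by reverse-inserting from $(P(w^{(d,r)}),Q)$ along the \emph{box} sequence of $Z(d,r)\setminus\lambda$; strictly speaking it shows that all $\mathbf{a'}$ \emph{arising from} some $T^+\in\SYT(Z(d,r))|_\lambda$ share the same $P$-tableau, not that an arbitrary $\mathbf{a'}\in\Red(w^\lambda)$ does. Upgrading to all of $\Red(w^\lambda)$ needs exactly the property you isolate: for a fixed $P$, the map \emph{corner} $\mapsto$ \emph{bumped-out letter} of Lemma~\ref{lemma:reverse-insertion} is injective. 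Your row-by-row case analysis of Step~2 is a sound way to verify this (one checks that given the output row and the incoming letter, the input row and the letter passed down are determined). In effect your reverse induction proves that $w^\lambda$ is vexillary with $\sh(P(w^\lambda))=\lambda$, which is precisely what the paper asserts immediately after this corollary but does not separately justify.
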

\begin{proof}
    By \eqref{eqn:Mar19aaa}, the $P$ tableaux are the same for every $\mathbf{a'}\in\Red(w^{\lambda})$. Since  $\sh(P(\mathbf{a'})) = \lambda$, $Q(\mathbf{a'}\mathbf{a^{\lambda}})\in \SYT(Z(d,r))|_{\lambda}$ and the map is well-defined. Injectivity follows from the injectivity of Kra\'skiewicz's insertion, and surjectivity follows from Proposition~\ref{prop:unique-inverse}.\qedhere
\end{proof}
Denote the unique $P$ tableau of $\Red(w^{\lambda})$ as $P(w^{\lambda})$. The next Proposition provides explicit description of $\mathbf{a}^{\lambda}$ and $P(w^{\lambda})$. 
\begin{prop}\label{prop:alambda-and-Plambda}
Let $\mathbf{a^\lambda}$ and $w^{\lambda}$ be as defined in \eqref{eqn:w^lambda}. Then $\mathbf{a^\lambda} = \mathbf{a_1^\lambda}\cdots \mathbf{a_d^\lambda}$ where
\begin{equation}\label{eqn:Mar21aaa}
    \mathbf{a_i^\lambda} = d+r-i-\mu_i+1,\ldots, d+r-i.
\end{equation}
This is simply reading off the entries in $Z(d,r)\setminus \lambda$ in Figure~\ref{fig:trapezoidT} from left to right along rows and from top to bottom. In terms of $P = P(w^\lambda)$,
\begin{equation}\label{eqn:Mar19bbb}
    P(i,j) = \begin{cases}
    r-j-\nu_{d+1+j} & \text{ if }j<0\\
    j & \text{ if }j\geq 0,
\end{cases}
\end{equation}
where $\nu_i=\min\{\mu_i,\mu_{i+1},\dots,\mu_d\}$ for all $i\in [d]$.
\end{prop}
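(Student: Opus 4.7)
The plan is to verify \eqref{eqn:Mar21aaa} and \eqref{eqn:Mar19bbb} simultaneously by directly performing the Kra\'skiewicz insertion of the candidate word $\mathbf{a^\lambda}$ into the candidate tableau $P_\lambda$ defined by \eqref{eqn:Mar19bbb}, and checking that the outcome is the trapezoidal tableau $P(w^{(d,r)})$ of Figure~\ref{fig:trapezoidT}, with the new boxes landing on the cells of $Z(d,r)\setminus\lambda$ in the left-to-right, top-to-bottom reading order dictated by Definition~\ref{def:SYT|lambda}. Once this computation is verified, the uniqueness in Proposition~\ref{prop:unique-inverse}, the common $P$-tableau guaranteed by Corollary~\ref{cor:RedtoSYTlambda}, and the injectivity of Kra\'skiewicz insertion (Theorem~\ref{thm:insertion}) together force $P(w^\lambda) = P_\lambda$ and $a_{N+1}\cdots a_\ell = \mathbf{a^\lambda}$.

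The insertion is organized into $d$ stages: stage $i$ consists of the $\mu_i$ consecutive inserts of $\mathbf{a_i^\lambda}$. The central structural claim, which I would prove by induction on $i$, is that each single insert of stage $i$ triggers a bumping cascade descending rows $1,2,\ldots,i$---invoking Case~2 at every intermediate row $k<i$ and terminating at row $i$ by appending a new box via Case~1 at the next unattached skew cell. The monotonicity $\nu_2\leq\nu_3\leq\cdots\leq\nu_d$ (immediate from $\nu_i$ being a minimum over a shrinking set) guarantees that after stage $k$ row $k$ has reached its full trapezoidal width and its increasing part equals $(1,2,\ldots,r+d-k)$; consequently, at every later intermediate row, Case~2.1.2 applies and produces $c=v+1$, passing $a'=v$ unchanged to the row below, so the inserted value $v$ propagates intact through rows $1,\ldots,i-1$ before being appended in row $i$.

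The key accounting is that across the $\mu_i$ inserts of stage $i$, precisely $\nu_i$ of them trigger Case~2.1.3 at column $-(d-i+1)$ in each intermediate row (each such activation raising that entry by exactly $1$), while the remaining $\mu_i-\nu_i$ inserts trigger Case~2.1.4 and leave the tableau unchanged. This explains the $\nu_i$ correction in \eqref{eqn:Mar19bbb}: the initial value $r+(d-i+1)-\nu_i$ at column $-(d-i+1)$ in $P_\lambda$ sits exactly $\nu_i$ below the target $r+(d-i+1)$ in $P(w^{(d,r)})$, and stage $i$'s $\nu_i$ Case~2.1.3 bumps fill this shortfall precisely. Concurrently, the $\mu_i$ new boxes attach to row $i$ at columns $\lambda_i-d+i,\ldots,r+d-i$ left-to-right, matching the skew-cell reading order of Definition~\ref{def:SYT|lambda}.

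The main obstacle will be executing the single-insert case analysis: determining for each row of the cascade which sub-case of Case~2 fires, based on comparing $c$ with the entries of $\mathbf{R}^\downarrow$. At the start of stage $i$, the decreasing part of row~$1$ is the strictly decreasing sequence $(r+(d-1),\ldots,r+(d-i+2),\,r+(d-i+1)-\nu_i,\,r+(d-i)-\nu_{i+1},\ldots,r+1-\nu_d,\,0)$, with the leftmost $i-2$ entries already raised to their final values by earlier stages. Since the $c$-values in stage $i$ lie in $[d+r-i-\mu_i+2,\,d+r-i+1]$, the updated entries sit strictly above $c$ and never contribute; the only entry at which a Case~2.1.3 swap can occur is the one at column $-(d-i+1)$, as follows from the observation that whenever $\nu_{i+1}>\nu_i$ one has $\nu_i=\mu_i$, which in turn forces the admissible range of $j$ for Case~2.1.3 at any smaller column to be empty. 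With this single-insert lemma in place, counting Case~2.1.3 activations and tracking the cascade's terminating append completes the verification.
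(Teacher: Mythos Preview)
Your plan coincides with the paper's proof: the paper inducts on $\ell-N$, inserting one letter $y$ at a time into the candidate tableau $P$ for $\lambda$ and verifying $P\leftarrow y=P'$ (where $P'$ is the already-known tableau for $\lambda'=\lambda\cup\{(x,y)\}$), splitting into the two cases $\nu_x=\mu_x$ (your Case~2.1.3 increment at column $-(d-x+1)$) versus $\nu_x<\mu_x$ (your Case~2.1.4, no change), both passing $a'=y$ unchanged down to row~$x$. The one ingredient the paper adds that you omit is an explicit verification, via the Coxeter identity $w(P)s_y=w(P')$, that each intermediate $P$ is a standard decomposition tableau---this is what licenses the use of Lemma~\ref{lemma:reverse-insertion}; your appeal to injectivity of the insertion step works too, but you should make explicit that a single Kra\'skiewicz bump is mechanically invertible from the new-box position regardless of whether the input tableau is already an SDT.
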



\begin{proof}
Recall from the proof of Proposition~\ref{prop:unique-inverse} that $\mathbf{a}^{\lambda}$ is obtained in the following way: let $P^+$ be the unique $P$ tableau corresponding to $\Red(w^{(d,r)})$ and $Q^+\in\SYT(Z(d,r))|_{\lambda}$, then $\ell-N$ steps of reverse Kra\'skiewicz's insertion on $(P^+,Q^+)$ pop out $a_{\ell},\ldots,a_{N+1}$. Readers are referred to Example~\ref{ex:w-lam} for visualization.

We use induction on $\ell-N$. The base case $\lambda=Z(d,r)$ is done in Proposition~\ref{prop:shape-of-P(w)}. For a general $\lambda$ with $N<\ell$, let $x$ be the smallest row index such that $\lambda_x\neq Z(d,r)_x$. Let $\lambda'=(\lambda_1,\ldots,\lambda_x+1,\ldots,\lambda_d)\subset Z(d,r)$ be obtained from $\lambda$ by adding one to row $x$. Let $(x,y)$ be the coordinate of this added box, where $y=\lambda_x+x-d=r+d+1-x-\mu_x>0$. Let $P$ be as described in \eqref{eqn:Mar19bbb}, and $P'=P(w^{\lambda'})$ as described analogously by inductive hypothesis. Notice that for $Q^+\in\SYT(Z(d,r))|_{\lambda}$, the largest entry of $Q':=Q^+|_{\lambda'}$ must be at coordinate $(x,y)$. In order to show that one step of reverse Kra\'skiewicz's insertion from $(P',Q')$ pops out $y$ and results in $(P,Q)$, by Lemma~\ref{lemma:reverse-insertion}, it suffices to show that 
\[P\leftarrow y=P',\]
and that $P$ is a standard decomposition tableaux (Definition~\ref{def:SDT}). Analogously define $\mu_i'$ and $\nu_i'$ for $\lambda'$. Note that $\mu_i = \mu_i' = 0$ for all $i\in [x-1]$
and $\mu_x=\mu_x'+1$. Also recall the notations in Section~\ref{sec:kraskiewicz} that the rows of $P$ are labeled as $P_1,\ldots,P_d$ and the reading word for $P$ is $\pi(P)=P_d\cdots P_1$. Let $w(P)$ be the signed permutation corresponding to this word.

We divide into two cases and show that $P\leftarrow y=P'$ and $w(P)s_y=w(P')$.

\emph{Case 1}: $\nu_x=\mu_x$. Let $j=x-d-1$. Then column $j$ of $P$ has value $r-j-\mu_x=y$ and column $j-1$ of $P$ has value $r-(j-1)-\mu_{x-1}=r-(j-1)\geq y+2$. In this case, $\nu$ and $\nu'$ differ only at $\nu_x=\mu_x$ and $\nu_x'=\mu_x'=\mu_x-1$. So the columns of $P'$ with negative indices can be obtained from that of $P$ by increasing column $j$ by $1$, that is, changing all the values $y$ to $y+1$. We then analyze the situations where $y$ is inserted into $P$, going through the algorithm in Section~\ref{sec:kraskiewicz}. In row $i$ where $i\leq x-1$, as $y$ cannot be the largest entry in $P^{\uparrow}$, we go to case 2.1.2 and insert $y+1$ to $P^{\downarrow}$; as $y$ is the largest integer weakly smaller than $y+1$, we go to case 2.1.3 by replacing $y$ with $y+1$ and continue to insert $y$ to the next row. During this process for $i\leq x-1$, we see that 
\allowdisplaybreaks
\begin{align*}
w(P_i)s_y=&w(P_i\downarrow)s_1s_2\cdots s_{\lambda_i-d+i-1}s_y\\
=&w(P_i\downarrow)s_{y+1}s_1s_2\cdots s_{\lambda_i-d+i-1}=w(P_i\downarrow)s_{y+1}w(P_i\uparrow)\\
=&\cdots s_{r-j+1}s_{y}\cdots s_{y+1}w(P_i\uparrow)\\
=&s_y\cdots s_{r-j+1}s_{y+1}\cdots w(P_i\uparrow)\\
=&s_yw(P'_i\downarrow)w(P_i\uparrow)=s_yw(P_i')
\end{align*}
where we used Coxeter relations $s_ys_{y+1}s_y=s_{y+1}s_{y}s_{y+1}$ and the commutation relations as $r-j+1\geq y+2$.
Finally, in row $x$, we insert $y$ to the rightmost position at coordinate $(x,y)$, resulting in $P'$ as desired. This also says that $w(P_x)s_y=w(P_x')$. Together with the above equalities, we obtain $w(P)s_y=w(P')$.

\emph{Case 2}: $\nu_x<\mu_x$. Let $\nu_x=\mu_z<\mu_x$ with $z>x$. In this case, $\nu_x'=\min\{\mu_x-1,\mu_z\}=\mu_z=\nu_x$ so $\nu$ and $\nu'$ are identical, and this means that the tableaux $P$ and $P'$ are the same in the negative columns. Let $j=r-\mu_z-y-1$. Then $d+1+j=d+r-\mu_z-y=d+r-\mu_z-(d+r+1-x-\mu_x)=x+\mu_x-\mu_z-1$. As $\mu_z<\mu_x$, this value $d+1+j\geq x$ and as $\lambda$ is a strict partition, $\mu_x+x\leq\mu_z+z$ so $d+1+j\leq z-1$. This calculation shows that column $j<0$ exists in row $1$ through row $x$ of $P$. Moreover, as $x\leq d+1+j\leq z$, $\nu_{d+1+j}=\nu_z$ so column $j$ of $P$ takes on value $r-j-\nu_z=y+1$. Since $d+2+j\leq z$, column $j+1$ of $P$ takes on value $r-(j+1)-\nu_{d+2+j}=y$. Now, for $i=1,\ldots,x-1$, when we insert $y$ to row $i$ of $P$, we go to case 2.1.2 as before since $y$ cannot be the largest entry of $P^{\uparrow}$ to insert $y+1$ to $P^{\downarrow}$; since $y+1$ exists in $P^{\downarrow}$ of row $i$, we go to case 2.1.4 by keeping $P^{\downarrow}$ unchanged and continue to insert $y$ to the next row. During this process, for $i\leq x-1$,
\allowdisplaybreaks
\begin{align*}
w(P_i)s_y=&w(P_i\downarrow)s_1s_2\cdots s_{\lambda_i-d+i-1}s_y\\
=&w(P_i\downarrow)s_{y+1}s_1s_2\cdots s_{\lambda_i-d+i-1}=w(P_i\downarrow)s_{y+1}w(P_i\uparrow)\\
=&\cdots s_{y+1}s_{y}\cdots s_{y+1}w(P_i\uparrow)=\cdots s_{y+1}s_{y}s_{y+1}\cdots w(P_i\uparrow)\\
=&\cdots s_ys_{y+1}s_y\cdots w(P_i\uparrow)=s_y\cdots s_{y+1}s_y\cdots w(P_i\uparrow)\\
=&s_yw(P_i\downarrow)w(P_i\uparrow)=s_yw(P_i)=s_yw(P'_{i}).
\end{align*}
Nothing changes until in the end, we insert $y$ in row $x$ at coordinate $(x,y)$, to obtain $P'$ as desired. This gives $w(P_x)s_y=w(P'_x)$ so $w(P)s_y=w(P')$ as desired.

We then show that $P$ is a standard decomposition tableaux by checking the two conditions in Definition~\ref{def:SDT}, where condition (2) is clear from construction. For (1), we use induction hypothesis that $\pi(P')$ is reduced, i.e. $\ell(\pi(P'))=|\lambda'|=|\lambda|+1$. As $w(P)s_y=w(P')$, $\ell(w(P))\geq\ell(w(P'))-1=|\lambda|$ so $\pi(P)$ is reduced as well.
\end{proof}

\begin{ex}\label{ex:w-lam}
Consider $\lambda=(6,2,1)\subset Z(3,2)$. Here $w^{(3,2)}=45\bar1\bar2\bar3$. We compute $\mathbf{a^\lambda}$ and $w^{\lambda}$ defined in \eqref{eqn:w^lambda} step by step via (the reverse of) Kra\'skiewicz's insertion. We start with the unique $P^+$ (see Proposition~\ref{prop:shape-of-P(w)}), and a standard shifted tableau $Q^+$ of shape $Z(3,2)$, padded from any standard shifted tableau $Q$ of shape $\lambda$, shown in Table~\ref{tab:ex-w-lam}. 

\begin{table}[h!]
\centering
\setlength{\extrarowheight}{10pt}
\ytableausetup{smalltableaux}
\begin{tabular}{c|c|c}
\text{insertion tableau} $P$ & \text{recording tableau} $Q$ & letter $a_i$'s \\
\begin{ytableau}
4 & 3 & 0 & 1 & 2 & 3 & 4 \\
\none & 3 & 0 & 1 & 2 & 3\\
\none & \none & 0 & 1 & 2
\end{ytableau} & 
\begin{ytableau}
*(yellow) & *(yellow) & *(yellow) & *(yellow) & *(yellow) & *(yellow) & 10\\
\none & *(yellow) & *(yellow) & 11 & 12 & 13\\
\none & \none & *(yellow) & 14 & 15
\end{ytableau} &  \\
\begin{ytableau}
4 & 2 & 0 & 1 & 2 & 3 & 4 \\
\none & 2 & 0 & 1 & 2 & 3\\
\none & \none & 0 & 1
\end{ytableau} & 
\begin{ytableau}
*(yellow) & *(yellow) & *(yellow) & *(yellow) & *(yellow) & *(yellow) & 10\\
\none & *(yellow) & *(yellow) & 11 & 12 & 13\\
\none & \none & *(yellow) & 14
\end{ytableau} & $a_{15}=2$ \\
\begin{ytableau}
4 & 1 & 0 & 1 & 2 & 3 & 4 \\
\none & 1 & 0 & 1 & 2 & 3\\
\none & \none & 0
\end{ytableau} & 
\begin{ytableau}
*(yellow) & *(yellow) & *(yellow) & *(yellow) & *(yellow) & *(yellow) & 10\\
\none & *(yellow) & *(yellow) & 11 & 12 & 13\\
\none & \none & *(yellow)
\end{ytableau} & $a_{14}=1$ \\
\begin{ytableau}
3 & 1 & 0 & 1 & 2 & 3 & 4 \\
\none & 1 & 0 & 1 & 2 \\
\none & \none & 0
\end{ytableau} & 
\begin{ytableau}
*(yellow) & *(yellow) & *(yellow) & *(yellow) & *(yellow) & *(yellow) & 10\\
\none & *(yellow) & *(yellow) & 11 & 12\\
\none & \none & *(yellow)
\end{ytableau} & $a_{13}=3$ \\
\begin{ytableau}
2 & 1 & 0 & 1 & 2 & 3 & 4 \\
\none & 1 & 0 & 1  \\
\none & \none & 0
\end{ytableau} & 
\begin{ytableau}
*(yellow) & *(yellow) & *(yellow) & *(yellow) & *(yellow) & *(yellow) & 10\\
\none & *(yellow) & *(yellow) & 11 \\
\none & \none & *(yellow)
\end{ytableau} & $a_{12}=2$ \\
\begin{ytableau}
2 & 1 & 0 & 1 & 2 & 3 & 4 \\
\none & 1 & 0   \\
\none & \none & 0
\end{ytableau} & 
\begin{ytableau}
*(yellow) & *(yellow) & *(yellow) & *(yellow) & *(yellow) & *(yellow) & 10\\
\none & *(yellow) & *(yellow) \\
\none & \none & *(yellow)
\end{ytableau} & $a_{11}=1$ \\
\begin{ytableau}
2 & 1 & 0 & 1 & 2 & 3 \\
\none & 1 & 0   \\
\none & \none & 0
\end{ytableau} & 
\begin{ytableau}
*(yellow) & *(yellow) & *(yellow) & *(yellow) & *(yellow) & *(yellow) \\
\none & *(yellow) & *(yellow) \\
\none & \none & *(yellow)
\end{ytableau} & $a_{10}=4$ \\
\end{tabular}
\caption{An example of $\mathbf{a}^{\lambda}$ and $w^{\lambda}$}
\label{tab:ex-w-lam}
\end{table}
We read off $\mathbf{a}^{\lambda}=a_{10}\cdots a_{15}=412312$. As $w^{\lambda}=w^{(d,r)}s_{a_{15}}\cdots s_{a_{10}}$, we obtain $w^{\lambda}=\bar2\bar14\bar35$.
\end{ex}

\begin{remark}
Here is an explicit description for $w^\lambda$. Draw a $(d+r)\times(d+r)$ triangle over $\lambda$ with box $(1,1)$ as the top left corner. Rotate by $45^\circ$ and consider the Dyck path of semilength $d+r$ that is the border of $\lambda$. Label the upsteps with $d+r,d+r-1,\dots,d+1,\bar1,\bar2,\dots,\Bar{d}$ in order, and pass these labels to the corresponding leftmost downsteps. Reading off the downsteps in order gives $w^\lambda$. An example is shown in Figure~\ref{fig:read-wlambda}.
\begin{figure}[h!]
    \centering
    \begin{tikzpicture}[scale = 0.5]
    \draw (0,0) -- ++(4,4) --++(3,-3) -- ++(1,1) --++(2,-2) -- cycle;
    \draw (2,2) -- ++(-1,1) --++(1,1) -- ++(-1,1) --++(1,1) -- ++(-1,1) --++(1,1) -- ++(6,-6);
    \node at (3,5) {$\lambda$};
    \begin{scope}[xshift=-0.5em,yshift=-0.6em]
    \node[blue] at (0.5,1) {$5$};
    \node[blue] at (1.5,2) {$4$};
    \node[blue] at (2.5,3) {$\bar1$};
    \node[blue] at (3.5,4) {$\bar2$};
    \node[blue] at (7.5,2) {$\bar3$};
    \end{scope}
    \draw[dashed,blue,<->](0.5,0.5) -- (9.5,0.5);
    \draw[dashed,blue,<->](1.5,1.5) -- (6.5,1.5);
    \draw[dashed,blue,<->](2.5,2.5) -- (5.5,2.5);
    \draw[dashed,blue,<->](3.5,3.5) -- (4.5,3.5);
    \draw[dashed,blue,<->](7.5,1.5) -- (8.5,1.5);
    \begin{scope}[xshift=0.5em,yshift=-0.6em]
    \node[red] at (9.5,1) {$5$};
    \node[red] at (6.5,2) {$4$};
    \node[red] at (5.5,3) {$\bar1$};
    \node[red] at (4.5,4) {$\bar2$};
    \node[red] at (8.5,2) {$\bar3$};
    \end{scope}
    \end{tikzpicture}
    \caption{Reading off $w^\lambda=\bar2\bar14\bar35$ for $\lambda=(6,2,1)$ and $(d,r)=(3,2)$}
    \label{fig:read-wlambda}
\end{figure}
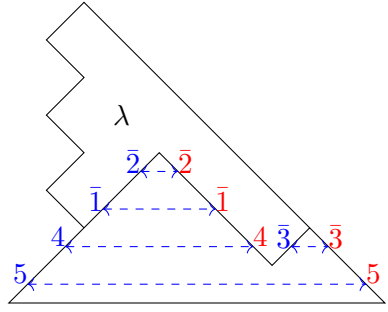
\end{remark}

\subsection{Bijection between \texorpdfstring{$\BS(\lambda)$}{} and \texorpdfstring{$\BS(Z(d,r))|_{\lambda}$}{}}

Recall some notations from the beginning of this section: $\mu_i=Z(d,r)_i-\lambda_i$, $\sigma_i=\sum_{k=1}^i\mu_k$ and $N=|\lambda|$. 
\begin{defin}
    Define $\BS(Z(d,r))|_{\lambda}$ to be the set of balanced tableaux $T$ of shape $Z(d,r)$ such that for all $i\in [d]$ and any $k\in [N+\sigma_{i-1}+1,N+\sigma_i]$, $k$ appears in row $i$ of $T$.
\end{defin}

\begin{lemma}\label{lemma:swap-add}
    Let $B\in \BS(\lambda)$ and fix some $i\in [d]$ such that either $i=1$ or $\lambda_{i-1}\geq\lambda_i+3$. Denote $\lambda^{\#}$ the shifted diagram obtained from $\lambda$ by adding a box in the $i$-th row. Let $j$ be the column index of the box $\lambda^{\#}\setminus\lambda$.
    Let $B^{\#}$ be the tableau obtained from $B$ by 
    \begin{enumerate}
        \item interchange column $j$ and $j+1$ of $B$,
        \item define $B^{\#}(i,j) = N+1$.
    \end{enumerate}
    Then $B^{\#}$ is a balanced tableau and the following map is a bijection:
    \begin{align}\label{eqn:swap&add}
    \begin{split}
        f_i:\BS(\lambda) \longrightarrow &\{T\in \BS(\lambda^{\#}):T(i,j) = N+1\}\\
        B \longmapsto &B^{\#}.
    \end{split}
    \end{align}
\end{lemma}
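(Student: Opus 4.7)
The plan is to verify directly that $B^{\#}$ satisfies the balance condition at every cell of $\lambda^{\#}$, and then to exhibit an explicit inverse of $f_i$. The key preliminary observation is that, under the hypothesis ($i=1$ or $\lambda_{i-1} \geq \lambda_i + 3$), columns $j$ and $j+1$ of $\lambda$ have the same length (both empty when $i=1$, both of height $i-1$ otherwise), and row $k$ of $\lambda$ reaches column $j+1$ for every $k<i$, so the column swap is well defined. Moreover, because $\lambda$ is strict with $\lambda_d>0$, the index $j = \lambda_i - d + i$ satisfies $j \geq 1$, so both swapped columns are non-negative and Lemma~\ref{lm:same-col-order} applies to give $B(k,j) < B(k,j+1)$ for every $k<i$. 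This inequality drives every rank-cancellation in the argument below.

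I would then verify balance of $B^{\#}$ cell by cell. At the new box $(i,j)$ the rank equals $1$ and $B^{\#}(i,j) = N+1$ is the global maximum, so balance is automatic. At $(k,j)$ with $k<i$ the rank $\rk(k,j)$ is unchanged, while the hook in $\lambda^{\#}$ has the same multiset as $\tilde{H}(k,j+1)$ augmented by $\{N+1,\, B(k,j)\}$; since $B(k,j+1)$ is the $(\rk(k,j)-1)$-st largest in $\tilde H(k,j+1)$ by balance of $B$, adjoining the new maximum $N+1$ pushes it to the $\rk(k,j)$-th largest position, and adjoining the smaller $B(k,j)$ does not alter this, giving balance. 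At $(k,j+1)$ with $k<i$ the hook multiset equals that of $\tilde{H}(k,j)$ minus $\{B(k,j+1)\}$; the removal of $B(k,j+1) > B(k,j)$ drops the rank of $B(k,j)$ by exactly one, matching $\rk^{\#}(k,j+1) = \rk(k,j)-1$. At $(i,l)$ with $l<j$ both the rank and the hook gain exactly one unit from the new box with value $N+1$, preserving balance. For every remaining cell, the hook either avoids columns $j,j+1$ altogether or contains $(k,j)$ and $(k,j+1)$ in the same row, so the swap preserves its multiset; the only delicate subcase is $l = i-d-1 < 0$, where the extended-hook contribution picks up all of row $i$ of $\lambda^{\#}$ (and hence the new box), while the rank formula for negative columns also increases by exactly one because $\lambda_{d+1+l} = \lambda_i$ grows by one.

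The bijection is completed by specifying and checking the inverse: send $T \in \BS(\lambda^{\#})$ with $T(i,j) = N+1$ to the tableau of shape $\lambda$ obtained by deleting $(i,j)$ and swapping columns $j$ and $j+1$. To verify this lands in $\BS(\lambda)$, I would run the above case analysis in reverse, with the one extra ingredient being the inequality $T(k,j) > T(k,j+1)$ for $k<i$, which plays the role of Lemma~\ref{lm:same-col-order}. This is established by a downward induction on $k$ from $i-1$ using balance of $T$ at $(k,j)$ and $(k,j+1)$ together with the known presence of $N+1$ at $(i,j)$ in the hook of $(k,j)$, mirroring the argument of Lemma~\ref{lm:same-col-order}. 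The main obstacle I anticipate is the bookkeeping in the subcase $l=i-d-1$, where one must simultaneously track the extended diagonal at $(i,l)$ (whose value equals $B(i,0)$ and is untouched by the operation), the new box $(i,j)$, and the compensating unit increase of the negative-column rank formula; once this is pinned down, the remaining cases follow directly.
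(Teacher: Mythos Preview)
Your proposal is correct and follows essentially the same approach as the paper's proof: a direct cell-by-cell verification of the balance condition on $\lambda^{\#}$, broken into the same four cases (your ``$(i,l)$ with $l\le j$'', ``delicate subcase $l=i-d-1$'', ``$(k,j)$'', and ``$(k,j+1)$'' are exactly the paper's Cases~1--4), followed by the explicit inverse map. You are in fact slightly more careful than the paper on the inverse direction: the paper simply asserts that $B^{\flat}$ is balanced ``by the same reasoning,'' whereas you correctly isolate the one extra ingredient needed there, namely the inequality $T(k,j)>T(k,j+1)$ for $k<i$ (the analogue of Lemma~\ref{lm:same-col-order} when columns $j$ and $j+1$ of $\lambda^{\#}$ differ in length by one, with the extra box carrying the global maximum $N+1$), and you sketch the right inductive argument for it.
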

\begin{proof}
    Let $rk_B,\tilde{H}_{B}$ and $rk_{B^{\#}}, \tilde{H}_{B^{\#}}$ be the rank function and extended hooks of $B$ and $B^{\#}$ respectively.
    We first verify that $B^{\#}(a,b)$ is the $rk_{B^{\#}}(a,b)$-th largest entry in $\tilde H_{B^{\#}}(a,b)\subset \tilde {B^{\#}}$ in the following four cases as in Figure~\ref{fig:4cases}:
    
    \emph{Case 1}: $a = i$, $b\leq j$. If $(a,b) = (i,j)$, then $rk_{B^{\#}}(a,b) = 1$. Since there is exactly one element in $\tilde H_{B^{\#}}(a,b)$, we are done. Now suppose $b<j$. Here we have 
    \[rk_{B^{\#}}(a,b) = rk_{B}(a,b)+1\ \text{and } \tilde{H}_{B^{\#}}(a,b) = \tilde{H}_{B}(a,b) \cup \{(i,j)\}.\] Since $B$ is balanced, $B(a,b)$ is the $rk_{B(a,b)}$-th largest element in $\tilde{H}_{B}(a,b)$. $B^{\#}(a,b)$ is then the $rk_{B^{\#}}(a,b)$-th largest entry in $\tilde{H}_{B^{\#}}(a,b)$ since $B(a',b') = B^{\#}(a',b')$ for all $(a',b')\in \tilde{H}_B(a,b)$ and $B^{\#}(i,j) = N+1$ is the largest entry in $\tilde{H}_{B^{\#}}(a,b)$.
    
    \emph{Case 2}: $a<i$, $b=-d-1+i$. Here we again have
    \[rk_{B^{\#}}(a,b) = rk_{B}(a,b)+1\ \text{and } \tilde{H}_{B^{\#}}(a,b) = \tilde{H}_{B}(a,b) \cup \{(i,j)\}.\]
    Since $B(a,j) = B^{\#}(a,j+1)$ and $B(a,j+1) = B^{\#}(a,j)$ and the entries not in column $j$ or $j+1$ remain unchanged under $f_i$, we obtain
    \[\{\tilde{B}^{\#}(a',b'):(a',b')\in \tilde{H}_{B^{\#}}(a,b)\} = \{\tilde{B}(a',b'):(a',b')\in \tilde{H}_{B}(a,b)\}\cup\{N+1\}.\]
    Since $N+1>B^{\#}(a,b)$ and $B$ is balanced, $B^{\#}(a,b)$ is the $rk_{B^{\#}}(a,b)$-th largest entry in $\tilde{H}_{B^{\#}}(a,b)$.
    
    \emph{Case 3}: $a<i$, $b=j$. Since $\lambda_d>0$, we have $j>0$ and thus 
    \begin{equation}\label{eqn:Mar16bbb}
        rk_{B^{\#}}(a,b) = rk_{B}(a,b) = rk_{B}(a,b+1)+1.
    \end{equation}
    Since we swapped column $j$ with $j+1$, we have $B^{\#}(a,b) = B(a,b+1)$ and
    \begin{equation}\label{eqn:Mar16aaa}
        \{B^{\#}(a',b'):(a',b')\in H_{B^{\#}}(a,b)\} = \{B(a',b'):(a',b')\in H_{B}(a,b+1)\}\cup\{B(a,b),N+1\}.
    \end{equation}
    Since $B$ is balanced, $B(a,b+1)$ is the $rk_{B}(a,b+1)$-th largest entry in $H_{B}(a,b+1)$. By Lemma~\ref{lm:same-col-order}, $B(a,b+1)>B(a,b)$. Since $B(a,b+1)<N+1$, by \eqref{eqn:Mar16aaa} and \eqref{eqn:Mar16bbb}, we conclude that $B^{\#}(a,b)$ is the $rk_{B^{\#}}(a,b)$-th largest entry in $H_{B^{\#}}(a,b)$.
    
    \emph{Case 4}: $a<i$, $b=j+1$. Again since $j>0$, 
    \begin{equation}\label{eqn:Mar16ccc}
        rk_{B^{\#}}(a,b) = rk_{B}(a,b) = rk_{B}(a,b-1)-1.
    \end{equation}
    Since we swapped column $j$ and $j+1$, we have $B^{\#}(a,b) = B(a,b-1)$ and
    \begin{equation}\label{eqn:Mar16ddd}
        \{B^{\#}(a',b'):(a',b')\in H_{B^{\#}}(a,b)\} = \{B(a',b'):(a',b')\in H_{B}(a,b-1)\}\setminus \{B(a,b)\}.
    \end{equation}
    Since $B$ is balanced, $B(a,b-1)$ is the $rk_{B}(a,b-1)$-largest entry in $H_{B}(a,b-1)$. By Lemma~\ref{lm:same-col-order}, $B(a,b-1)<B(a,b)$. We can then conclude that $B^{\#}(a,b)$ is the $rk_{B^{\#}}(a,b)$-th largest entry in $H_{B^{\#}}(a,b)$ by \eqref{eqn:Mar16ccc} and \eqref{eqn:Mar16ddd}.
    
    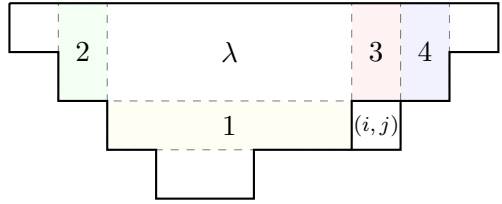
\begin{figure}[h!]
    \begin{center}
    \begin{tikzpicture}[scale = 1.3]
        \draw (3.75,-1.25) node{\scriptsize{$(i,j)$}};
        \draw (2.25, -0.5) node{$\lambda$};
        \filldraw[color=gray, fill=yellow!5, dashed] (1,-1.5) rectangle (3.5,-1);
        \draw (2.25, -1.25) node{$1$};
        \filldraw[color=gray, fill=green!5, dashed] (0.5,-1) rectangle (1,0);
        \draw (0.75, -0.5) node{$2$};
        \filldraw[color=gray, fill=red!5, dashed] (3.5,-1) rectangle (4,0);
        \draw (3.75, -0.5) node{$3$};
        \filldraw[color=gray, fill=blue!5, dashed] (4,-1) rectangle (4.5,0);
        \draw (4.25, -0.5) node{$4$};
        \draw[black, thick] (0,0) -- (5,0) -- (5,-0.5) -- (4.5,-0.5) -- (4.5,-1) -- (3.5,-1) -- (3.5,-1.5) -- (2.5,-1.5) -- (2.5,-2) -- (1.5,-2) -- (1.5,-1.5) -- (1,-1.5) -- (1,-1) -- (0.5,-1) -- (0.5,-0.5) -- (0,-0.5) -- (0,0);
        \draw[black, thick] (4,-1) -- (4,-1.5) -- (3.5,-1.5);
        
    \end{tikzpicture}
\end{center}
    \caption{Location of $(a,b)$ in the four cases of Lemma~\ref{lemma:swap-add}}
    \label{fig:4cases}
\end{figure}
    
    For every $(a,b)\in \lambda^{\#}$ that is not in the four cases above, we have $B^{\#}(a,b) = B(a,b)$ and 
    \[\{B^{\#}(a',b'):(a',b')\in H_{B^{\#}}(a,b)\} = \{B(a',b'):(a',b')\in H_{B}(a,b)\}.\]
    Therefore the balanced condition of $B^{\#}$ follows from that of $B$ and thus $B^{\#}$ is balanced.
    
    Now we are left to show that $f_i$ is a bijection. For a balanced tableau $B$ of shape $\lambda^{\#}$ such that $B(i,j) = N+1$, define $B^{\flat}$ to be the tableau obtained from $B$ by 
    \begin{enumerate}
        \item remove the entry $B(i,j)$,
        \item interchange column $j$ and $j+1$.
    \end{enumerate}
    The fact that $B^{\flat}$ is a balanced tableau follows from the same reasoning as the analysis above.
    It is then clear that the map
    \begin{align}
    \begin{split}
        g_i:\{T\in \BS(\lambda^{\#}):T(i,j) = N+1\} \longrightarrow &\BS(\lambda)\\
        B \longmapsto &B^{\flat}
    \end{split}
    \end{align}
    is the inverse of $f_i$ and thus $f_i$ is a bijection.
\end{proof}

\begin{lemma}\label{lm:BStoBSlambda}
    $\BS(Z(d,r))|_{\lambda}$ is the image of $\BS(\lambda)$ under the composition of maps $F= (f_d)^{a_d}\circ (f_{d-1})^{a_{d-1}}\circ\cdots\circ (f_1)^{a_1}$ with each $f_i$ defined as in \eqref{eqn:swap&add}. As a result, $F$ is a bijection between $\BS(\lambda)$ and $\BS(Z(d,r))|_{\lambda}$.
\end{lemma}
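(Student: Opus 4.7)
The plan is to prove the bijection by induction on $\ell - N = |Z(d,r) \setminus \lambda|$. The base case $\lambda = Z(d,r)$ is trivial, since $F$ is the empty composition and $\BS(Z(d,r))|_{Z(d,r)} = \BS(Z(d,r))$. For the inductive step, I would let $i_0$ denote the smallest row index with $\mu_{i_0} > 0$ and set $\lambda^+ = \lambda \cup \{(i_0, j_0)\}$ with $j_0 = \lambda_{i_0} + i_0 - d$. The composition factors as $F = G \circ f_{i_0}$, where $G = (f_d)^{\mu_d} \circ \cdots \circ (f_{i_0})^{\mu_{i_0} - 1}$ is the analogous composition for $\lambda^+ \subseteq Z(d,r)$. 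By the induction hypothesis, $G$ is a bijection from $\BS(\lambda^+)$ onto $\BS(Z(d,r))|_{\lambda^+}$.

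Next I would verify the hypothesis of Lemma~\ref{lemma:swap-add} for $f_{i_0}$ applied to $\lambda$: when $i_0 \geq 2$, the minimality of $i_0$ forces $\mu_{i_0-1} = 0$, so $\lambda_{i_0-1} = Z(d,r)_{i_0-1} = Z(d,r)_{i_0} + 2 \geq \lambda_{i_0} + 3$. Unpacking definitions shows that $\BS(Z(d,r))|_\lambda \subseteq \BS(Z(d,r))|_{\lambda^+}$ and
\[\BS(Z(d,r))|_\lambda = \{T \in \BS(Z(d,r))|_{\lambda^+} : N+1 \text{ lies in row } i_0 \text{ of } T\}.\]
The central observation is that every $f_i$ preserves the row of each pre-existing entry: the column swap in $f_i$ exchanges entries within a single row between two adjacent columns (and only in rows strictly above $i$, by the gap condition), while the new entry is placed in row $i$. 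Consequently $G$ preserves the row of entry $N+1$, so $F(\BS(\lambda)) = G(f_{i_0}(\BS(\lambda))) \subseteq \BS(Z(d,r))|_\lambda$, and injectivity of $F$ follows immediately from that of $f_{i_0}$ and $G$.

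For surjectivity, given $T \in \BS(Z(d,r))|_\lambda$, the preimage $T' = G^{-1}(T) \in \BS(\lambda^+)$ has $N+1$ in row $i_0$, and it remains to show $T'(i_0, j_0) = N+1$. Since $N+1$ is the maximum entry of $T'$, it must occupy a box of rank $1$ in $\lambda^+$. A direct computation using the formula for $\rk$ shows that in row $i_0$ of $\lambda^+$, the unique box with $\rk = 1$ is $(i_0, j_0)$: for $j \geq 0$ the formula $\rk(i_0, j) = \lambda^+_{i_0} - d + i_0 - j$ gives $\rk = 1$ precisely at $j = j_0$, and for $j < 0$ the strictness bound $\lambda_{i_0} \geq d - i_0 + 1$ yields $\lambda^+_{i_0} + i_0 \geq d + 2$, which combined with $\lambda^+_{d+1+j} + j \geq 0$ forces $\rk(i_0, j) \geq 3$. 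Hence $T'$ lies in the image of $f_{i_0}$ and $T = G(T') \in F(\BS(\lambda))$. The main obstacle is expected to be this rank calculation for $j < 0$, which relies essentially on the strictness of $\lambda$ together with the assumption $\lambda_d > 0$.
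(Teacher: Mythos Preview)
Your proof is correct and rests on the same key observation as the paper---that each $f_i$ (and its inverse) preserves the row of every pre-existing entry---so the image of $F$ lands in $\BS(Z(d,r))|_\lambda$ and conversely. The paper argues both containments directly in two short paragraphs, whereas you set up an induction on $\ell-N$ and peel off one box at a time.

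Your inductive organization does buy something: it makes explicit why $F^{-1}$ is well-defined on $\BS(Z(d,r))|_\lambda$. The paper writes ``$T(a,b)$ appears in row $a$ in $(f_i)^{-1}T$'' without pausing to check that $(f_i)^{-1}$ is applicable, i.e.\ that the current maximum entry sits at the required corner box. Your rank computation fills exactly this gap: knowing only that $N+1$ lies in row $i_0$ of $T'=G^{-1}(T)$, you show that the sole box of rank~$1$ in that row is $(i_0,j_0)$, forcing $T'(i_0,j_0)=N+1$ and hence $T'\in\operatorname{im}(f_{i_0})$. The estimate $\rk(i_0,j)\geq 3$ for $j<0$, using $\lambda_{i_0}+i_0\geq d+1$ and $\lambda_{d+1+j}+j\geq 0$ from strictness and $\lambda_d>0$, is correct. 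So the two proofs are the same in substance, with yours supplying a detail the paper leaves to the reader.
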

\begin{proof}
    Notice by the construction in Lemma~\ref{lemma:swap-add}, for any tableau $T$ and any $(a,b)\in \sh(T)$, the entry $T(a,b)$ appear in row $a$ of $f_i(T)$ for all $i\in [d]$. Now by definition $N+b_i+j$ appear in row $i$ of $(f_i)^j\circ (f_{i-1})^{a_{i-1}}\circ\cdots \circ (f_1)^{a_1}(B)$ for all $i\in [d],1\leq j\leq a_i$, and all $B\in\BS(\lambda)$. Thus for all $i\in [d]$, $k\in [N+b_i+1,N+b_{i+1}]$ and all $B\in \BS(\lambda)$, $k$ appears in row $i$ of $F(B)$. Therefore $F(B)\in \BS(Z(d,r))|_{\lambda}$. 
    
    Similarly, notice that for any tableau $T$ and any $(a,b)\in \sh(T)$ such that $T(a,b)$ is not the largest entry, $T(a,b)$ appears in row $a$ in $(f_i)^{-1}T$. As a result, for any $k\leq N$ and any $T\in \BS(Z(d,r))|_{\lambda}$, $k$ appears in the same row of $T$ as $F^{-1}(T)$. Therefore $F^{-1}(T)\in \BS(\lambda)$ and we can conclude that $\BS(Z(d,r))|_{\lambda}$ is the image of $\BS(\lambda)$ under $F$.  
\end{proof}

\subsection{Bijection between \texorpdfstring{$\BS(Z(d,r))|_{\lambda}$}{} and \texorpdfstring{$\Red(w^{\lambda})$}{} }

\begin{prop}\label{lm:BSlambdatoRed}
    Let $\mathbf{a}\in \Red(w^{(d,r)})$ be a reduced word. Then $\ro(\mathbf{a})$ gives a balanced tableau in $\BS(Z(d,r))|_{\lambda}$ if and only if the ending segment of $\mathbf{a}$ is the same as $\mathbf{a^\lambda}$ as in Proposition~\ref{prop:alambda-and-Plambda}. Consequently, this induces a bijection between $\BS(Z(d,r))|_{\lambda}$ and $\Red(w^\lambda)$.
\end{prop}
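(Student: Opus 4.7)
My plan is to prove both directions of the biconditional by induction on $|Z(d,r)\setminus\lambda|$. The base case is $\lambda = Z(d,r)$: here $\mathbf{a}^\lambda$ is empty and $\BS(Z(d,r))|_\lambda=\BS(Z(d,r))$, so the equivalence is immediate from Corollary~\ref{cor:reduced-word-balanced}.

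For the inductive step, let $\lambda'=\lambda\cup\{(x,y)\}$ be chosen as in the proof of Proposition~\ref{prop:alambda-and-Plambda}, so that $\mathbf{a}^\lambda = y\cdot\mathbf{a}^{\lambda'}$ with $y=\lambda_x+x-d$. Setting $N=|\lambda|$, I will use two easily verified observations: (i) directly from the definitions (comparing the row constraints for $\lambda$ and for $\lambda'$, which differ only on the label $N+1$), $B\in\BS(Z(d,r))|_\lambda$ if and only if $B\in\BS(Z(d,r))|_{\lambda'}$ and the label $N+1$ appears in row $x$; (ii) $\mathbf{a}$ ends with $\mathbf{a}^\lambda$ if and only if $\mathbf{a}$ ends with $\mathbf{a}^{\lambda'}$ and $a_{N+1}=y$. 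Combining these with the inductive hypothesis applied to $\lambda'$, the proposition reduces to the local claim: assuming $\mathbf{a}$ ends with $\mathbf{a}^{\lambda'}$, $a_{N+1}=y$ if and only if the label $N+1$ sits in row $x$.

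Under this assumption, $a_1\cdots a_{N+1}$ is a reduced word for $w^{\lambda'}$, so $a_{N+1}$ must be a right descent of $w^{\lambda'}$, and the reflection recording label $N+1$ is $\gamma_{N+1} = -w^{\lambda'}\alpha_{a_{N+1}}$, yielding the cell $f^{-1}(\gamma_{N+1})$. Thus the local claim boils down to showing that the right descents of $w^{\lambda'}$ biject with the removable corners of $\lambda'$ inside $Z(d,r)$: descent $s_{y'}$ corresponds to corner $(x',y')$, and $-w^{\lambda'}\alpha_{y'}$ is a root of the form $f(x',c')$ for some column $c'$, hence lies in row $x'$.

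The main obstacle is establishing this descent-to-row correspondence cleanly. My plan is to read it off from the explicit Dyck-path description of $w^{\lambda'}$ given in the Remark following Proposition~\ref{prop:alambda-and-Plambda}: the removable corners of $\lambda'$ are the peaks of the Dyck path, and at each peak the one-line notation of $w^{\lambda'}$ holds an upstep label and its matched downstep label at two consecutive positions, whose indices precisely encode the row of the corner through the labeling $f$. Swapping those two values via a right descent produces a root of the prescribed form; a short case split on whether the corner lies in the rectangular part ($y'\leq r$) or the right-staircase part ($y'>r$) of $Z(d,r)$ then verifies the row assignment. Once this is in place, the induction closes, and the claimed bijectivity at the end of the proposition follows by combining the equivalence just established with Corollary~\ref{cor:reduced-word-balanced}.
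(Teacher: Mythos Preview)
Your induction on $|Z(d,r)\setminus\lambda|$, together with observations (i) and (ii), correctly reduces the proposition to the local claim. The difficulty is entirely in that local claim, and your plan for it has a genuine gap.

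You propose to prove it by showing that the right descents of $w^{\lambda'}$ biject with the removable corners of $\lambda'$. But this fails already for $\lambda'=(7,2,1)\subset Z(3,2)$: here $w^{\lambda'}=\bar2\,\bar1\,4\,5\,\bar3$ has right descents $s_0$ and $s_4$, while the only removable corner of $(7,2,1)$ keeping three nonzero parts is $(1,4)$. What you actually need is narrower: among all right descents $s_j$ of $w^{\lambda'}$, only $j=y$ gives $-w^{\lambda'}\alpha_j$ in row $x$. Unwinding the row assignment of $f$, this comes down to two concrete facts about $w^{\lambda'}$: (a) $w^{\lambda'}(y+1)=\overline{d+1-x}$, so the value $d+1-x$ never appears positively; and (b) $w^{\lambda'}(y)>d$. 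Neither is extracted cleanly from the Dyck-path picture as you describe it (where you conflate peaks with removable corners), and the Dyck-path Remark itself is stated without proof, so leaning on it is shaky.

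The paper avoids this by inducting \emph{row by row} rather than box by box. The payoff is that the intermediate permutation $w'$, obtained after peeling off $\mathbf{a}_{d-k+1}^\lambda\cdots\mathbf{a}_d^\lambda$, provably ends in $\overline{k+1}\,\overline{k+2}\cdots\overline d$: all letters in that tail lie in $[1,r+k-1]$ by the explicit formula, so positions $\geq r+k+1$ of $w^{(d,r)}$ are untouched. With this in hand both directions fall out at once: roots in row $d-k$ are exactly those involving $e_{k+1}$, so requiring the labels $N+\sigma_{d-k-1}+1,\ldots,N+\sigma_{d-k}$ to land in row $d-k$ forces each step to move $\overline{k+1}$ one position forward into slot $r+k+1$, i.e.\ forces the word $\mathbf{a}_{d-k}^\lambda$; and conversely that word manifestly does this. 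Your box-level induction can in fact be completed along the same lines (observation (a) above follows from exactly the same ``letters are bounded'' argument applied to $\mathbf{a}^{\lambda'}$), but the paper's row-level packaging is cleaner and bypasses the Dyck-path detour entirely.
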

\begin{proof}
    Set $\mathbf{a}=a_1a_2\cdots a_\ell\in\Red(w^{(d,r)})$, and $\ro(\mathbf{a})=\gamma_1,\gamma_2,\dots,\gamma_\ell$. By construction (Lemma~\ref{lemma:swap-add}), $\ro(\mathbf{a})$ produces a balanced tableau in $\BS(Z(d,r))|_{\lambda}$ if and only if for all $i\in [d]$, the roots $\gamma_{N+\sigma_{i-1}+1},\dots,\gamma_{N+\sigma_i}$ appear in the $i$-th row of the labeling of $Z(d,r)$ as in \eqref{eqn:defrootlabel}.
    Now we use induction on $k$ to prove the following two conditions on $\mathbf{a}$ are equivalent. Notice that the proposition is precisely the case $k=d$.
    \begin{enumerate}
        \item $\mathbf{a}$ ends with $a_{N+\sigma_{d-k}+1}\cdots a_\ell=\mathbf{a_{d-k+1}^\lambda}\mathbf{a_{d-k+2}^\lambda}\cdots\mathbf{a_d^\lambda}$;
        \item The roots $\gamma_{N+\sigma_{i-1}+1},\dots,\gamma_{N+\sigma_i}$ appear in the $i$-th row of the labeling of $Z(d,r)$ for all $d-k+1\leq i\leq d$.
    \end{enumerate}
    When $k=0$, both conditions are vacuous. Assume the two conditions are equivalent for $k$. Consider the permutation $w'=w^{(d,r)}s_{a_\ell}s_{a_{\ell-1}}\cdots s_{a_{N+\sigma_{d-k}+1}}$. By inductive hypothesis we know that $a_{N+\sigma_{d-k}+1}\cdots a_\ell=\mathbf{a_{d-k+1}^\lambda}\mathbf{a_{d-k+2}^\lambda}\cdots\mathbf{a_d^\lambda}$. By \eqref{eqn:Mar21aaa}, the subsequence $\mathbf{a_{d-k+1}^\lambda}\mathbf{a_{d-k+2}^\lambda}\cdots\mathbf{a_d^\lambda}$ only contains indices in the interval $[1,r+k-1]$. Therefore, $w'(t) = w^{(d,r)}(t)$ for all $t\geq d+k+1$ since they are not affected by the simple transpositions. Thus the one-line notation of $w'$ ends with $\overline{k+1}\,\overline{k+2}\cdots\overline{d}$. Now consider the following steps:
    \begin{equation}\label{eqn:Mar21bbb}
    \begin{tikzcd}[column sep=6em]
        w' & w''\arrow{l}{a_{N+\sigma_{d-k}}}[swap]{\gamma_{N+\sigma_{d-k}}} &
        \cdots\arrow{l}{a_{N+\sigma_{d-k}-1}}[swap]{\gamma_{N+\sigma_{d-k}-1}} &
        w'''\arrow{l}{a_{N+\sigma_{d-k-1}+1}}[swap]{\gamma_{N+\sigma_{d-k-1}+1}}.
    \end{tikzcd}
    \end{equation}
    To show (2)$\implies$(1), we assume the $\gamma$'s in \eqref{eqn:Mar21bbb} all appear in the $(d-k)^\text{th}$ row of $Z(d,r)$, which contains $e_{k+1}+e_{q}$ for $q>d$ or $e_{k+1}-e_i$ for $i<k+1$. All these roots include $e_{k+1}$, so these steps must swap $\overline{k+1}$ forward in $w'$. As a result, $a_{N+\sigma_{d-k-1}+1}\cdots a_{N+\sigma_{d-k}}=\mathbf{a_{d-k}^\lambda}$. To show (1)$\implies$(2), if these steps are exactly $\mathbf{a_{d-k}^\lambda}$, since $w'(r+k+1)=\overline{k+1}$, these steps must swap $\overline{k+1}$ forward in $w'$, so the roots $\gamma$'s all lie in the $(d-k)^\text{th}$ row of $Z(d,r)$. This concludes the induction step and we are done by induction.
\end{proof}

\begin{proof}[Proof of Theorem~\ref{thm:main}]
Combining Corollary~\ref{cor:RedtoSYTlambda}, Lemma~\ref{lm:BStoBSlambda} and Proposition~\ref{lm:BSlambdatoRed}, we get a bijection between $\SYT(\lambda)$ and $\BS(\lambda)$.
\end{proof}

\begin{ex}\label{ex:main-ex}
We now work out an example in the case where $\lambda=(6,2,1)$, $d=3$ and $r=2$. Assume we start with a balanced tableaux $B\in\BS(\lambda)$ shown here: \[\ytableausetup{boxsize=1.4em}
B = \begin{ytableau}
6 & 3 & 4 & 1 & 5 & 9 \\
\none & 7 & 8 & \none & \none \\
\none & \none & 2 & \none 
\end{ytableau}\, .\]
We can complete it to $B^+\in \BS(Z(3,2))$ using the algorithm in Lemma~\ref{lm:BStoBSlambda}
\begin{center}
\ytableausetup{boxsize=1.5em}
$B = $
\begin{ytableau}
    6 & 3 & 4 & 1 & 5 & 9 \\
    \none & 7 & 8 & \none & \none \\
    \none & \none & 2 & \none
\end{ytableau}
$\rightarrow$ 
\begin{ytableau}
    6 & 3 & 4 & 1 & 5 & 9 & \circled{10} \\
    \none & 7 & 8 & \none & \none \\
    \none & \none & 2 & \none
\end{ytableau}
$\rightarrow$
\begin{ytableau}
    6 & 3 & 4 & 5 & 1 & 9 & \circled{10} \\
    \none & 7 & 8 & \circled{11} & \none \\
    \none & \none & 2 & \none
\end{ytableau}
\vspace{0.2cm}
$\rightarrow$
\begin{ytableau}
    6 & 3 & 4 & 5 & 9 & 1 & \circled{10} \\
    \none & 7 & 8 & \circled{11} & \circled{12} \\
    \none & \none & 2 & \none
\end{ytableau}
$\rightarrow$
\begin{ytableau}
    6 & 3 & 4 & 5 & 9 & \circled{10} & 1 \\
    \none & 7 & 8 & \circled{11} & \circled{12} & \circled{13}\\
    \none & \none & 2 & \none
\end{ytableau}
$\rightarrow$
\begin{ytableau}
    6 & 3 & 4 & 9 & 5 & \circled{10} & 1 \\
    \none & 7 & 8 & \circled{12} & \circled{11} & \circled{13}\\
    \none & \none & 2 & \circled{14}
\end{ytableau}
\vspace{0.2cm}
$\rightarrow$
\begin{ytableau}
    6 & 3 & 4 & 9 & \circled{10} & 5 & 1 \\
    \none & 7 & 8 & \circled{12} & \circled{13} & \circled{11}\\
    \none & \none & 2 & \circled{14} & \circled{15}
\end{ytableau}
$= B^+$
\end{center}
Now $B^+$ gives a reflection order of $w^{(d,r)}$ as follows
\[
\begin{tikzcd}[row sep = small]
12345 \arrow[r, "e_3-e_2"]& 13245 \arrow[r, "e_1"] & \bar13245 \arrow[r, "e_3+e_1"] & 3\bar1245 \arrow[r, "e_3"] & \bar3\bar1245 \arrow[r, "e_3-e_1"] & \bar1\bar3245\\
\arrow[r, "e_3+e_2"]& \bar12\bar345 \arrow[r, "e_2+e_1"] & 2\bar1\bar345 \arrow[r, "e_2"] & \bar2\bar1\bar345 \arrow[r, "e_4+e_3"] & \bar2\bar14\bar35 \arrow[r, "e_5+e_3"] & \bar2\bar145\bar3\\
\arrow[r, "e_2-e_1"]& \bar1\bar245\bar3 \arrow[r, "e_4+e_2"] & \bar14\bar25\bar3 \arrow[r, "e_5+e_2"] & \bar145\bar2\bar3 \arrow[r, "e_4+e_1"] & 4\bar15\bar2\bar3 \arrow[r, "e_5+e_1"] & 45\bar1\bar2\bar3.\\
\end{tikzcd}
\]
We can read off the reduced word $\mathbf{a}=201012103412312\in\Red(w^{(3,2)})$. We can confirm that the reduced word ends with $\mathbf{a^\lambda}=412312$. Now we perform the Kra\'skiewicz's insertion on $\mathbf{a}$ described in Section~\ref{sec:kraskiewicz} and we get
\[\ytableausetup{boxsize=1.4em}
P(\mathbf{a}) = \begin{ytableau}
4 & 3 & 0 & 1 & 2 & 3 & 4\\
\none & 3 & 0 & 1 & 2 & 3\\
\none & \none &0 & 1 & 2
\end{ytableau},\ 
Q(\mathbf{a}) = \begin{ytableau}
1 & 2 & 3 & 5 & 6 & 9 & 10\\
\none & 4 & 7 & 11 & 12 & 13\\
\none & \none & 8 & 14 & 15
\end{ytableau},
\]
Finally, let $T^+=Q(\mathbf{a})\in\SYT(w^{(3,2)})$, and $T\in\SYT(\lambda)$ is obtained from $T^+$ by deleting the largest entries until $|\lambda|$ entries are left:
\[\ytableausetup{boxsize=1.5em}
T^+ = \begin{ytableau}
1 & 2 & 3 & 5 & 6 & 9 & \circled{10}\\
\none & 4 & 7 & \circled{11} & \circled{12} & \circled{13}\\
\none & \none & 8 & \circled{14} & \circled{15}
\end{ytableau},\ 
T = \begin{ytableau}
1 & 2 & 3 & 5 & 6 & 9\\
\none & 4 & 7\\
\none & \none & 8
\end{ytableau}.
\]
\end{ex}




\section*{Acknowledgements}
We are grateful to Alex Yong for helpful conversation. SG was partially supported by NSF RTG grant DMS 1937241 and an NSF Graduate Research Fellowship under grant No. DGE-1746047.

\bibliographystyle{plain}
\bibliography{ref.bib}
\end{document}